\def\0{\mathbf{0}}
\def\eps{\varepsilon}
\def\lam{\lambda}
\def\rr{\rightarrow}
\def\dr{\downarrow}
\def \< {\langle}
\def \> {\rangle}
\def\beqa{\begin{eqnarray}}
\def\eeqa{\end{eqnarray}}
\def\beqas{\begin{eqnarray*}}
\def\eeqas{\end{eqnarray*}}
\newtheorem{theorem}{Theorem}[section]
\newtheorem{lemma}[theorem]{Lemma}
\newtheorem{corollary}[theorem]{Corollary}
\newtheorem{remark}[theorem]{Remark}
\newtheorem{definition}[theorem]{Definition}
\numberwithin{equation}{section}
\newcommand{\old}[1]{{}}
\def\endpf{{\ \hfill\hbox{\vrule width1.0ex height1.0ex}\parfillskip 0pt}}
\newenvironment{proof}{\noindent{\bf Proof:}}{\endpf}
\newcommand{\qed}{\hfill\rule{2mm}{2mm}}
\newcommand{\supp}{\operatorname{supp}}
\newcommand\myeq{\stackrel{\normalfont\mbox{d}}{=}}
\newcommand{\bd}{\begin{displaymath}}
\newcommand{\ed}{\end{displaymath}}
\newcommand{\be}{\begin{equation}}
\newcommand{\ee}{\end{equation}}
\newcommand{\bq}{\begin{eqnarray}}
\newcommand{\eq}{\end{eqnarray}}
\newcommand{\bn}{\begin{eqnarray*}}
\newcommand{\en}{\end{eqnarray*}}
\newcommand{\re}{\mathds{R}}
\title{Pathwise Uniqueness for the Stochastic Heat Equation with H\"{o}lder Continuous Drift and Noise Coefficients}
\author{Leonid Mytnik \qquad \qquad \qquad Eyal Neuman \\ \\ Faculty of Industrial Engineering \\ and Management  \\ Technion - Institute of Technology \\ Haifa 3200 \\ Israel }
\date{}
\begin{document}

\maketitle

\paragraph{Abstract.}
We study the solutions of the stochastic heat equation with multiplicative space-time white noise.
We prove a comparison theorem between the solutions of stochastic heat equations with the same noise coefficient which is H\"{o}lder continuous of index $\gamma>3/4$, and drift coefficients that are Lipschitz continuous.
Later we use the comparison theorem to get sufficient conditions for the pathwise uniqueness for solutions of the stochastic heat equation, when both the white noise and the drift coefficients are H\"{o}lder continuous.

\section{Introduction and main results}
We study the solutions of the stochastic heat equation with space-time white noise. This equation has the form
\be \label{SHE}
\frac{\partial}{\partial t} u(t,x) = \frac{1}{2}\Delta u(t,x) + \sigma(t,x,u(t,x))\dot{W}+b(t,x,u(t,x)) , \ \ t\geq0, \ \ x\in \re.
\ee
Here $\Delta$ denotes the Laplacian and $\sigma(t,x,u),b(t,x,u):\re_{+}\times\re^2\rr \re$ are continuous functions with at most a linear growth in the $u$ variable.
We assume that the noise $\dot{W}$ is a space-time white noise on $\re_{+}\times\re$. Equations like (\ref{SHE}) arise as scaling limits of of critical branching particle systems. For example, in the case where $\sigma(t,x,u)=\sqrt{u}$ and $b= 0$,
such equations describe the evolution in time and space of the density of the classical \emph{super-Brownian motion} (see e.g. Section 3.4 of \cite{Perk2002}). If $b= b(x)\geq 0$ is a continuous deterministic function with compact support and $\sigma(u)=\sqrt{u}$,
then the solution to (\ref{SHE}) arises as scaling limit of critical branching particle systems with immigration and the limit is known as \emph{super-Brownian motion with immigration} $b$. In other words, the density of the super-Brownian motion with immigration $b$ satisfies (\ref{SHE}), $P$-a.s. (see e.g. Section 3.4 of \cite{Perk2002}). \\\\
In this work we consider the pathwise uniqueness for the solution of (\ref{SHE}) where $\sigma$ and $b$ are H\"{o}lder continuous in $u$ and $\dot{W}$ is a space-time Gaussian white noise. More precisely $W $ is a mean zero Gaussian process defined on a filtered probability space $(\Omega,\mathcal{F},\mathcal{F}_t,P)$, where $\mathcal{F}_t$ satisfies the usual hypothesis and we assume that $W$ has the following properties. We denote by
\bd
W_t(\phi)=\int_{0}^{t}\int_{\re}\phi(s,y)W(ds,dy), \ \  t\geq 0,
\ed
the stochastic integral of a function $\phi$ with respect to $W$.
We denote by $ \mathcal{C}_c^\infty(\re_{+}\times\re)$ the space of compactly supported infinitely differentiable functions on $\re_{+}\times\re$.
We assume that $W$ has the following covariance structure
\bd
E(W_t(\phi)W_t(\psi))=\int_{0}^{t}\int_{\re}\phi(s,y)\psi(s,y)dy ds, \ \ t\geq 0,
\ed
for $\phi,\psi\in \mathcal{C}_c^\infty(\re_{+}\times\re)$.
The stochastic heat equation with space-time white noise was studied among many others, by Caba{\~n}a \cite{Cabana70}, Dawson \cite{Dawson72}, \cite{Dawson75}, Krylov and Rozovskii \cite{Krylov77}, \cite{Krilov79}, \cite{Krilov79b}, Funaki \cite{Funaki83}, \cite{Funaki84} and Walsh \cite{Walsh}.
Pathwise uniqueness of the solutions for the stochastic heat equation, when the white noise coefficient $\sigma$ and the drift coefficient are Lipschitz continuous was derived in \cite{Walsh}. In \cite{MP09}, pathwise uniqueness for the solutions of the stochastic heat equation, where the white noise coefficient is H\"{o}lder continuous of index $\gamma >3/4$, and again the drift coefficient $b$ is Lipschitz continuous was established. The $d$-dimensional stochastic heat equation driven by colored Gaussian noise was also extensively studied. Pathwise uniqueness for the solutions of the stochastic heat equation driven by colored Gaussian noise, with H\"{o}lder continuous noise coefficients was studied in \cite{MPS06}. The result in \cite{MPS06} was later improved by Rippl and Sturm in \cite{Rippl-Sturm2013}.
The method of proof in \cite{MP09}, \cite{MPS06} and \cite{Rippl-Sturm2013} is a version of the Yamada-Watanabe argument (see \cite{YW71}) for infinite dimensional stochastic differential equations. \\\\
Pathwise uniqueness of the solutions for the stochastic heat equation (\ref{SHE}) where $b$ is a non-Lipschitz measurable function was studied in \cite{GyongyPardoux1993}, \cite{GyongyPardoux1993b}, \cite{Bally-Gyon-Pard1994}, \cite{Gyon1995}, \cite{Cerrai-DaPrato-Flandoli2013}, \cite{DaPrato-Flandoli-Priola-Rockner2013} among others. In these papers $\sigma$ is a Lipschitz continuous function which satisfies a so-called \emph{non-degeneracy condition}. For example in \cite{Gyon1995},  $\sigma$ satisfies the following non-degeneracy condition
\bq \label{non-deg}
\sigma(t,x,u)\geq \eps >0,  \ \forall (t,x,u)\in \re_{+}\times \re \times \re,
\eq
and $b$ is a $P\times \mathcal{B}(\re)$-measurable random field which also satisfies some integrability conditions.
The idea of proof in \cite{GyongyPardoux1993}, \cite{GyongyPardoux1993b} \cite{Bally-Gyon-Pard1994} and \cite{Gyon1995} is as follows: first proving uniqueness in law by Girsanov's theorem and then using a comparison theorem to get pathwise uniqueness. The proofs also use Malliavin calculus to get estimates on the density of the solutions to (\ref{SHE}) without drift.   \\\\
Before we describe in more detail the known uniqueness results for the stochastic heat equation with space-time white noise, we introduce additional notation and definitions.
\paragraph{Notation.}
For every $E\subset \re$, we denote by $\mathcal{C}(E)$ the space of continuous functions on $E$. In addition, a superscript $k$, (respectively, $\infty$), indicates that functions are in addition $k$ times (respectively, infinitely often), continuously differentiable. A subscript $c$ indicates that they also have compact support. \medskip \\
For $f\in \mathcal{C}(\re)$ set
\be \label{CtemNorm}
\|f\|_\lam=\sup_{x\in \re}|f(x)|e^{-\lam|x|}, \ \lam \in \re, 
\ee
and define $$\mathcal{C}_{tem}:=\{f\in \mathcal{C}(\re), \|f\|_\lam<\infty \ \ \textrm{for every} \ \  \lam>0\}.$$ The topology on this space is induced by the norms $\|\cdot \|_\lam$ for $\lam>0$. \\\\
For $I\subset \re_{+}$ let $\mathcal{C}(I,E)$ be the space of all continuous functions on $I$ taking values in topological space $E$ endowed with the topology of uniform convergence on compact subsets of $I$.
Hence the notation $u \in \mathcal{C}(\re_{+},\mathcal{C}_{tem})$ implies that $u$ is a continuous function on $\re_{+}\times \re$ and
\bq \label{c-tem-sol}
\sup_{t\in[0,T]}\sup_{x\in\re}|u(t,x)|e^{-\lam|x|}< \infty , \ \ \forall  \lam>0, \ T>0.
\eq
In many cases it is possible to show that solutions to (\ref{SHE}) are in $\mathcal{C}(\re_{+},\mathcal{C}_{tem})$.
Let us define a stochastically strong solution to (\ref{SHE}), which is also called a strong solution to (\ref{SHE}).
\begin{definition} (Definition next to Equation (1.5) in \cite{MP09}) \label{Def-Strong-sol}
Let $(\Omega,\mathcal{F},\mathcal{F}_t,P)$ be a probability space and let $W$ be a white noise process defined on $(\Omega,\mathcal{F},\mathcal{F}_t,P)$. Let $\mathcal{F}_t^W\subset \mathcal{F}_t$ be the filtration generated by $W$. A stochastic process $u:\Omega \times \re_{+}\times \re \rr \re $ which is jointly measurable and $\mathcal{F}^W_t$-adapted, is said to be a stochastically strong solution to (\ref{SHE}) with initial condition $u_0$ on $(\Omega,\mathcal{F},\mathcal{F}_t,P)$, if for all $t \geq 0$ and $x\in \re$,
\bq \label{MSHE}
u(t,x)&=&G_{t}u_0(x) + \int_{0}^{t}\int_{\re}G_{t-s}(x-y)\sigma(s,y,u(s,y))W(ds,dy) \nonumber \\
&&+\int_{0}^{t}\int_{\re}G_{t-s}(x-y)b(s,y,u(s,y))dyds, \ \  P-\rm{a.s.}
\eq
\end{definition}
Here
\bn
G_t(x)=\frac{1}{\sqrt{2\pi t}}e^{\frac{-x^2}{2t}}, \ x\in\re , \ t>0,
\en
and $G_{t}f(x)=\int_{\re}G_{t}(x-y)f(y)dy$, for all $f$ such that the integral exists.
\\\\
In this work we study the uniqueness property in the sense of pathwise uniqueness. The definition of pathwise uniqueness is given below.
 \begin{definition} (Definition before Theorem 1.2 in  \cite{MP09})
We say that pathwise uniqueness holds for solutions of (\ref{SHE}) in $\mathcal{C}(\re_{+},\mathcal{C}_{tem})$ if for every deterministic initial condition, $u_0\in \mathcal{C}_{tem}$, any two solutions to (\ref{SHE}) with sample paths a.s. in $\mathcal{C}(\re_{+},\mathcal{C}_{tem})$ are equal with probability $1$.
\end{definition}
\paragraph{Convention.} Constants whose values are unimportant and may change from line to line are denoted by $C_i,M_i, \ i=1,2,..$, while constants whose values will be referred to later and appear initially in say, Equation $(i.j)$ are denoted by $C_{(i.j)}$. \\\\
Next we present in more detail some results on pathwise uniqueness for the solutions of (\ref{SHE}) driven by space-time white noise which are relevant to us.
When $\sigma$ and $b$ are Lipschitz continuous, the existence and uniqueness of a strong solution to (\ref{SHE}) in $\mathcal{C}(\re_+,\mathcal{C}_{tem})$ was proved in \cite{Shi94}. The proof uses the standard tools that were developed in \cite{Walsh} for solutions to SPDEs.
In \cite{MP09}, Lipschitz assumptions on $\sigma$ were relaxed and the following conditions were introduced:
for every $T>0$, there exists a constant $C_{(\ref{grow})}(T)>0$ such that for all  $(t,x,u)\in [0,T]\times \re^2$,
\be \label{grow}
|\sigma(t,x,u)|+|b(t,x,u)|\leq C_{(\ref{grow})}(T)(1+|u|).
\ee
Also, for some $\gamma>3/4$ there are $ R_1, R_2>0$ and for all $T>0$ there is an $ R_0(T)$ so that for all $t\in[0,T]$ and all $(x,u,u')\in \re^3$,
\be \label{HolSigmaCon}
|\sigma(t,x,u) - \sigma(t,x,u')| \leq  R_0(T)e^{ R_1|x|}(1+|u|+|u'|)^{ R_2}|u-u'|^\gamma,
\ee
and there is $B>0$ such that for all $(t,x,u,u')\in \re_{+}\times\re^3$,
\be \label{LipsDrift}
|b(t,x,u) - b(t,x,u')| \leq  B|u-u'|.
\ee
Mytnik and Perkins in \cite{MP09} proved that if $u_0\in \mathcal{C}_{tem}$, and $b,\sigma:\re_{+}\times\re^2\rr\re$ satisfy (\ref{grow}), (\ref{HolSigmaCon}) and (\ref{LipsDrift}), then there exists a unique strong solution of (\ref{SHE}) in $\mathcal{C}(\re_+,\mathcal{C}_{tem})$.
\begin{remark}  \label{MomentRemark}
It was also proved in \cite{MP09} (see Equation (2.25)) that if $b$ and $\sigma$ satisfy (\ref{grow}) then any $\mathcal{C}(\re_+,\mathcal{C}_{tem})$ solution $u$ to (\ref{SHE}) satisfies
\bq \label{Gen-Moment-Cond}
E\big(\sup_{t\in [0,T]}\sup_{x\in\re}|u(t,x)|^pe^{-\lam|x|}\big)< \infty, \ \forall \lam,p >0.
\eq
The bound (\ref{Gen-Moment-Cond}) will be very useful in our proofs later.
\end{remark}
Now we are ready to present our main results. The first result is a comparison theorem for the solutions of (\ref{SHE}) with H\"{o}lder continuous $\sigma$ and a Lipschitz continuous $b$.
The second result of the paper is the pathwise uniqueness for (\ref{SHE}) under assumptions (\ref{grow}) and (\ref{HolSigmaCon}) on $\sigma$ and while relaxing the Lipschitz assumption on $b$.
\medskip \\
In what follows we assume that the drift coefficient $b(t,x,u,\omega):\re_{+}\times\re^2\times \Omega\rr \re$ in (\ref{SHE}) is an $\{\mathcal{F}^{W}_{t}\}$~-predictable function. We further assume that the constants $C(T)$ and $B$ in (\ref{grow}), and (\ref{LipsDrift}) do not dependent on $\omega$. The dependence of $b$ in $\omega$ is often suppressed in our notation for the sake of readability.
\begin{theorem}\label{Theorem-Holder-Comp}
Assume that $b_i:\re_{+}\times\re^2 \times \Omega \rr \re, \ (i=1,2)$ and $\sigma:\re_{+}\times\re^2\rr\re,$ satisfy (\ref{grow}), (\ref{HolSigmaCon}), (\ref{LipsDrift}), $P$-a.s.
Let $u^i(t,\cdot)$ be a $\mathcal{C}_{tem}$-valued solution of (\ref{MSHE}) associated with the coefficients $\sigma$ and $b_i$, having initial condition $u^i(0,\cdot)=u^{i}_0(\cdot)\in \mathcal{C}_{tem}, \ i=1,2$. Suppose further that
\bq
\sigma(t,x,u) \textrm{ and } b_i(t,x,u,\omega) \ (i=1,2) \  \textrm{ are continuous in } (x,u), \ P-\rm{a.s.} ,
\eq
\bq \label{cond-geq}
b_1(t,x,u,\omega) \leq b_2(t,x,u,\omega), \ \forall t\geq 0, \ x\in \re, \ u\in \re, \ P-\rm{a.s.},
\eq
and
\bq \label{cond-init}
u^1_0(x)\leq u^2_0(x), \ \forall x\in \re.
\eq
Then,
\bq \label{comp-prop}
P(u^2(t,\cdot)\geq u^1(t,\cdot), \textrm { for every } t\geq 0)=1.
\eq
\end{theorem}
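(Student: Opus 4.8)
The plan is to adapt the Yamada–Watanabe machinery of Mytnik–Perkins \cite{MP09} to the comparison setting, working with $v(t,x) := u^1(t,x) - u^2(t,x)$ and showing that its positive part vanishes. Write $D(t,x) := u^1(t,x)-u^2(t,x)$, which solves the mild equation
\bq
D(t,x) = G_t v_0(x) + \int_0^t\!\!\int_\re G_{t-s}(x-y)\big(\sigma(s,y,u^1)-\sigma(s,y,u^2)\big)W(ds,dy) + \int_0^t\!\!\int_\re G_{t-s}(x-y)\big(b_1(s,y,u^1)-b_2(s,y,u^2)\big)dy\,ds, \nonumber
\eq
with $v_0 = u^1_0 - u^2_0 \le 0$ by (\ref{cond-init}). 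The goal is $P(D(t,x)\le 0 \ \forall t,x)=1$, which by continuity of the paths is equivalent to (\ref{comp-prop}). First I would split the drift difference as $b_1(s,y,u^1)-b_2(s,y,u^2) = \big(b_1(s,y,u^1)-b_1(s,y,u^2)\big) + \big(b_1(s,y,u^2)-b_2(s,y,u^2)\big)$; the first bracket is controlled by (\ref{LipsDrift}) in terms of $|D|$, and the second bracket is $\le 0$ everywhere by (\ref{cond-geq}), so it can only help push $D$ in the negative direction and will be discarded after the test function is chosen to be supported on $\{D>0\}$.

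Next I would introduce the usual Yamada–Watanabe test functions: pick $a_n \downarrow 0$ with $\int_{a_n}^{a_{n-1}} x^{-(2\gamma-1)}\,dx = n$ (this is where $\gamma > 3/4$, equivalently $2\gamma - 1 > 1/2$, enters so the interval lengths are summable enough), and take $\psi_n$ smooth, even, supported on $(a_n, a_{n-1})$, with $0 \le \psi_n(x) \le 2/(n x^{2\gamma-1})$ and $\int \psi_n = 1$; set $\phi_n(x) = \int_0^{|x|}\!\int_0^y \psi_n(z)\,dz\,dy$, so $\phi_n(x)\uparrow |x|$, $\phi_n$ is $C^2$, $0 \le \phi_n' \le 1$, and $\phi_n'' = \psi_n$. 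Then I would apply Itô's formula to $\phi_n(D(t,x))$ — more precisely, to $\langle \phi_n(D(t,\cdot)), \Phi\rangle$ for a nonnegative rapidly-decreasing spatial test function $\Phi$ — mimicking the weak-form computation in \cite{MP09,MPS06}. This produces a drift term from $\tfrac12\Delta$, a martingale term, a term $\int \phi_n'(D)\cdot(\text{drift difference})$, and the Itô correction $\tfrac12\int \psi_n(D)(\sigma(u^1)-\sigma(u^2))^2$. On the set $\{D>0\}$ the factor $\phi_n'(D) \ge 0$, so the $(b_1(u^2)-b_2(u^2))$ piece contributes a nonpositive term and is dropped; the Lipschitz piece is bounded by $B\int \phi_n'(D)|D|\Phi \le B\int |D|\Phi$; and the crucial Itô-correction term, using (\ref{HolSigmaCon}), is bounded by $C\int \psi_n(D)\,e^{2R_1|x|}(1+|u^1|+|u^2|)^{2R_2}|D|^{2\gamma}\Phi \le C\int \frac{2}{n|D|^{2\gamma-1}} e^{2R_1|x|}(\cdots)^{2R_2}|D|^{2\gamma}\Phi = \frac{C}{n}\int e^{2R_1|x|}(1+|u^1|+|u^2|)^{2R_2}|D|\Phi$, whose expectation is finite and $O(1)$ uniformly in $n$ by the moment bound (\ref{Gen-Moment-Cond}) of Remark \ref{MomentRemark} (after absorbing the exponential weight into $\Phi$, or choosing $\Phi$ with enough decay). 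Taking expectations, letting $n\to\infty$ so the Itô-correction term vanishes and $\phi_n(D^+)\to D^+$ by monotone/dominated convergence, and using that $G_t v_0 \le 0$ kills the initial term, I arrive at
\bq
E\langle D(t,\cdot)^+, \Phi\rangle \le C\int_0^t E\langle D(s,\cdot)^+, \tilde\Phi\rangle\,ds \nonumber
\eq
for a related test function $\tilde\Phi$; a Gronwall argument (after a standard localization in $\lambda$ to handle the tempered growth) then forces $E\langle D(t,\cdot)^+,\Phi\rangle = 0$ for all $t$ and all such $\Phi$, hence $D(t,x)\le 0$ for a.e. $(t,x)$, and then for all $(t,x)$ by path continuity, giving (\ref{comp-prop}).

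The main obstacle — and the step deserving the most care — is the Itô-correction estimate together with the passage $n\to\infty$: one must verify that the Laplacian term and the martingale term are genuinely harmless in the weak formulation (the martingale term has zero expectation after a localization argument, and the Laplacian term, after integrating by parts onto $\Phi$, is controlled because $\phi_n(D)$ grows at most linearly and $\Delta\Phi$ is integrable against the tempered bounds), and that the exponential weight $e^{2R_1|x|}$ in (\ref{HolSigmaCon}) can be dominated uniformly. This is exactly the weighting bookkeeping that \cite{MP09} carries out for pathwise uniqueness; the only genuinely new point here is the sign analysis of the drift split, which is elementary once the test function is arranged to be nonnegative and the comparison hypothesis (\ref{cond-geq}) is invoked on the set where $\phi_n' \ge 0$. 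I would therefore present the argument by quoting the relevant weak-form Itô computation and weighting estimates from \cite{MP09} and emphasizing the modifications: the two-sided drift, the decomposition of the drift difference, and the discarding of the favorably-signed term.
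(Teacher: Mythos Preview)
Your approach takes a genuinely different route from the paper's. The paper does not redo any Yamada--Watanabe estimates; it uses the strong uniqueness theorem of \cite{MP09} as a black box. Since each $u^i$ is the unique strong solution of (\ref{SHE}) with coefficients $(b_i,\sigma)$, there exist measurable functionals $F_i$ of the noise with $u^i=F_i(W)$. The paper then mollifies $\sigma$ to Lipschitz $\sigma^n$, applies Shiga's Lipschitz comparison theorem (Corollary~2.4 in \cite{Shi94}) to get $u_1^n\le u_2^n$ for the approximating equations, passes to a weak limit via tightness and Skorohod on an auxiliary probability space $(\widetilde\Omega,\widetilde P)$ to obtain $\tilde u_1\le\tilde u_2$ there, identifies $\tilde u_i=F_i(\widetilde W)$ by strong uniqueness, and concludes $F_1(\widetilde W)\le F_2(\widetilde W)$ $\widetilde P$-a.s., hence $F_1(W)\le F_2(W)$ $P$-a.s.\ by equality in law. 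This is short precisely because the hard analysis is already encapsulated in the strong-uniqueness input. Your route---re-running the \cite{MP09} computation with a one-sided test function and the drift split---should also succeed in principle, but it amounts to redoing the delicate double-limit argument of \cite{MP09} rather than quoting its conclusion.

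Two slips to repair if you pursue your route. First, your $\phi_n$ is even (it approximates $|x|$), so on $\{D<0\}$ one has $\phi_n'(D)\le 0$ and hence $\phi_n'(D)\bigl(b_1(u^2)-b_2(u^2)\bigr)\ge 0$, which is not droppable; you need the one-sided $\phi_n$ vanishing on $(-\infty,0]$ and approximating $x^+$ (your later ``$\phi_n(D^+)\to D^+$'' suggests this is what you intend). Second, the bound $\psi_n(D)|D|^{2\gamma}\le \tfrac{2}{n}|D|$ is the SDE heuristic and is not directly available here: one cannot apply It\^o's formula to $\phi_n(D(t,x))$ pointwise in $x$, and after the required spatial mollification the quadratic-variation term carries a diverging factor (from the $L^2$-norm of the mollifier) that must be compensated by the H\"older regularity of the solutions. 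That compensation is exactly the lengthy $\gamma>3/4$ analysis filling most of \cite{MP09}, to which your final sentence correctly defers but which the sketch preceding it understates.
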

\begin{remark}
Comparison theorems for the stochastic heat equations when the white noise and drift coefficients $\sigma$ and $b$
 are Lipschitz continuous were proved in \cite{Mueller91} and \cite{Shi94} among others. A weaker version of a comparison theorem was proved in Proposition 3.1 in \cite{MMQ2010} for the case of a non-Lipschitz $\sigma$. It was proved in \cite{MMQ2010} that there exists a probability space on which there is a white noise $\dot W$ such that (\ref{comp-prop}) holds. Note that in Theorem \ref{Theorem-Holder-Comp} the probability space and the white noise are specified in advance.
\end{remark}
Before we state our main results we introduce some notation and recall Girsanov's theorem for the white noise process.
\paragraph{Notation.}
Let $W$ be a space-time white noise. Denote by $\{\mathcal{F}^W_t\}_{t\geq 0}$ the complete, right continuous filtration of $W$.
Fix $T>0$ and let $\{Z(s,x):  (s,x)\in [0,T] \times \re \}$ be an $\re$-valued random field that is adapted to $\mathcal{F}_t^W$ and satisfies
\bq \label{Cond-girs1}
\int_{0}^T\int_{\re}Z(s,x,\omega)^2 dx ds < \infty, \ P-\rm{a.s.}
\eq
For $t\in[0,T]$, define
\bq \label{L-t}
L_t=\exp{\bigg(\int_{0}^t\int_{\re}Z(s,x)W(ds , dx)-\frac{1}{2}\int_{0}^t\int_{\re}Z(s,x)^2 dxds \bigg)}.
\eq
The following version of Girsanov's theorem for the white noise process is Theorem 10.14 in \cite{Daprato92}.
Other versions can be found in Proposition 1.6 \cite{NualartPardoux1994} or Theorem 3.4 in \cite{Mueller-Dalang2013}.
\begin{theorem}[Theorem 10.2.1 in \cite{Daprato92}]\label{girsanov}
If $\{Z(s,x):  (s,x)\in [0,T] \times \re \}$ is such that $E(L_T)=1$ then
\bn
\widetilde W(dt , dx) = Z(t,x)dt dx +W(dt , dx), \ t\in [0,T], \ x\in \re,
\en
is a space-time white noise under the probability measure $Q$, where $Q$ is defined by
\bq \label{dQ-dP}
\frac{dQ}{dP}\bigg|_{\mathcal F^{W}_{T}}=L_T. 
\eq
Here $L_{T}$ is the Radon-Nikodym derivative of $Q$ with respect to $P$ restricted to $\mathcal F^{W}_{T}$.
\end{theorem}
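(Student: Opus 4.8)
The statement is the specialization to space-time white noise of the classical Girsanov theorem for cylindrical Wiener processes, so the plan is to translate the problem into the Hilbert-space language of \cite{Daprato92} and then invoke the result quoted there. Fix an orthonormal basis $\{e_k\}_{k\geq1}$ of $H:=L^2(\re,dy)$ and put $\beta^k_t:=\int_0^t\int_{\re}e_k(y)\,W(ds,dy)$. The assumed covariance structure of $W$ gives $E[\beta^k_t\beta^j_t]=t\,\delta_{kj}$, and joint Gaussianity together with path continuity then shows that $(\beta^k)_{k\geq1}$ is a family of independent standard $(\mathcal{F}^W_t)$-Brownian motions; hence the process $B$ on $H$ determined by $\langle B_t,h\rangle_H=\sum_k\langle h,e_k\rangle_H\,\beta^k_t$ ($h\in H$) is a cylindrical Wiener process on $H$ whose natural filtration coincides with $(\mathcal{F}^W_t)$. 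Moreover, for every predictable $\phi$ with $\int_0^T\int_{\re}\phi(s,y)^2\,dy\,ds<\infty$ $P$-a.s.\ one has $\int_0^t\int_{\re}\phi(s,y)\,W(ds,dy)=\int_0^t\langle\phi(s,\cdot),dB_s\rangle_H$, identifying the white-noise integral with the $H$-valued stochastic integral.

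Under this dictionary, condition \eqref{Cond-girs1} says precisely that $s\mapsto Z(s,\cdot)$ is an $(\mathcal{F}^W_t)$-adapted, $H$-valued process with $\int_0^T\|Z(s,\cdot)\|_H^2\,ds<\infty$ $P$-a.s., and the functional $L_t$ of \eqref{L-t} equals the Hilbert-space exponential $\exp\!\big(\int_0^t\langle Z(s,\cdot),dB_s\rangle_H-\tfrac12\int_0^t\|Z(s,\cdot)\|_H^2\,ds\big)$. The standing assumption $E(L_T)=1$ is exactly the condition ensuring that $(L_t)_{t\in[0,T]}$ is a $P$-martingale, so that $Q$ defined by \eqref{dQ-dP} is a probability measure on $\mathcal{F}^W_T$. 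Applying Theorem 10.2.1 in \cite{Daprato92} to the cylindrical Wiener process $B$, with $Z$ in the role of the Girsanov drift, yields that $\widetilde B_t:=B_t+\int_0^t Z(s,\cdot)\,ds$ is a cylindrical Wiener process on $H$ under $Q$, with respect to $(\mathcal{F}^W_t)$. Unwinding the dictionary, the random measure $\widetilde W(dt,dx):=W(dt,dx)+Z(t,x)\,dt\,dx$ then satisfies $E_Q[\widetilde W_t(\phi)\widetilde W_t(\psi)]=\int_0^t\int_{\re}\phi(s,y)\psi(s,y)\,dy\,ds$ for all $\phi,\psi\in\mathcal{C}_c^\infty(\re_{+}\times\re)$, i.e.\ $\widetilde W$ is a space-time white noise under $Q$, which is the claim.

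The only points that require care are the measure-theoretic identifications underlying this dictionary: that the filtration generated by $W$ and that generated by $B$ agree, that the white-noise and $H$-valued stochastic integrals coincide on the relevant class of integrands, and that an $\re$-valued random field $Z$ satisfying \eqref{Cond-girs1} is the same datum as an $H$-valued adapted process in $L^2([0,T];H)$ $P$-a.s. I do not expect a genuine obstacle here; all of the analytic substance of the theorem is already contained in the quoted Hilbert-space Girsanov theorem, and no new estimates are needed.
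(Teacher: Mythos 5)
The paper does not prove this statement at all: it is quoted directly as Girsanov's theorem for the white noise process from \cite{Daprato92} (with pointers to alternative versions in \cite{NualartPardoux1994} and \cite{Mueller-Dalang2013}), so there is no internal proof to compare against. Your proposal is correct and takes essentially the same route — invoking the Hilbert-space Girsanov theorem of \cite{Daprato92} — merely making explicit the standard dictionary between a space-time white noise and a cylindrical Wiener process on $L^{2}(\re)$ that the citation presupposes.
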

\begin{remark} \label{rem-nov} 
The assumption that $E(L_T)=1$ is often replaced by the assumption that $\{L_t\}_{t\in[0,T]}$ is a martingale with respect to the filtration $\mathcal{F}_t^W$.
Such assumption is satisfied if, for example Novikov's condition holds:
\bq \label{nov-cond}
E\bigg(\exp{\bigg(\frac{1}{2}\int_{0}^t\int_{\re}Z(s,x)^2 dx ds\bigg)}\bigg) <\infty,  \ \textrm{for every} \  t\in [0,T],
\eq
see for example Proposition 10.17 in \cite{Daprato92}.
\end{remark}
Before we state our main results we will need some additional definitions.  \medskip \\
Let $(\Omega,\mathcal{F},\mathcal{F}_t,P)$ be a probability space and let $W$ be a white noise process defined on $(\Omega,\mathcal{F},\mathcal{F}_t,P)$. Let $u_0\in \mathcal{C}_{tem}$. Denote by $\mathcal{S}_{u_0}^W$ the class of strong $\mathcal{C}(\re_{+},\mathcal{C}_{tem})$-solutions to (\ref{SHE}). \medskip \\
Now we will give our basic assumptions on the drift coefficient $b$ in (\ref{SHE}).
\paragraph{Assumption A.}
We assume that there exists an $\{\mathcal{F}^{W}_{t}\}$~-predictable function $Z(t,x,u,\omega):\re_{+}\times \re^2 \times \Omega \rr \re$ such that
\bq \label{zsx}
b(t,x,u,\omega)=Z(t,x,u,\omega){\sigma(t,x,u)}, \ \forall t\in [0,T], \ x\in \re, \ u\in \re, \ P-\rm{a.s.}
\eq
The following theorem gives a sufficient condition for the pathwise uniqueness of a solution to (\ref{SHE}) when both the white noise and drift coefficients are non-Lipschitz. This is the main result of this paper. The existence of a weak solution to this equation under less restrictive assumptions on $\sigma$ and $b$ was proved in \cite{MP09}.
\begin{theorem}\label{Thm-UNIQ}
Let $\dot{W}$ be a space-time white noise. Let $u(0,\cdot)\in \mathcal{C}_{tem}(\re)$. Let $b: \re_{+}\times \re^2\times \Omega \rr \re$ and $\sigma: \re_{+}\times \re^2\rr \re$ be continous in $(x,u)$ and satisfy (\ref{grow}) and Assumption A , $P$-a.s. Let $\sigma$ satisfy (\ref{HolSigmaCon}) for some $\gamma>3/4$. Assume that for every $u\in \mathcal{S}_{u_0}^W$, $Z(t,x,u(t,x))$ from (\ref{zsx}) satisfies (\ref{Cond-girs1}).  Then pathwise uniqueness holds for the solutions of (\ref{SHE}) with sample paths a.s. in $\mathcal{C}(\re_{+},\mathcal{C}_{tem})$.
\end{theorem}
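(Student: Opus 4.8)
The plan is to reduce pathwise uniqueness for (\ref{SHE}) to pathwise uniqueness of the \emph{driftless} equation (with H\"older noise coefficient $\sigma$ satisfying (\ref{HolSigmaCon}), $\gamma>3/4$), which is exactly the setting handled in \cite{MP09}, combined with the comparison result of Theorem~\ref{Theorem-Holder-Comp}. Let $u^1,u^2\in\mathcal{S}_{u_0}^W$ be two strong solutions on the same filtered probability space $(\Omega,\mathcal F,\mathcal F_t,P)$ driven by the same white noise $W$, with the same initial datum $u_0\in\mathcal C_{tem}$. By Assumption~A we may write $b(t,x,v)=Z(t,x,v)\,\sigma(t,x,v)$, and by hypothesis $Z(t,x,u^i(t,x))$ satisfies the integrability condition (\ref{Cond-girs1}). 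The idea, following the Girsanov approach of \cite{GyongyPardoux1993, Gyon1995}, is to remove the drift by a change of measure: for a fixed $T>0$ and a fixed index, say $i=1$, set $Z_1(s,x):=Z(s,x,u^1(s,x))$ and let $L_t$ be the exponential martingale (\ref{L-t}) built from $Z_1$. The first step is to verify $E(L_T)=1$; here I would use Remark~\ref{rem-nov}, checking Novikov's condition (\ref{nov-cond}) by combining the linear growth bound (\ref{grow}) on $b$ and the lower-bound-free estimate this forces on $Z_1$ via $|Z_1|\le |b|/|\sigma|$—no, more carefully, (\ref{grow}) only bounds $|b|$, so instead one uses the moment bound (\ref{Gen-Moment-Cond}) from Remark~\ref{MomentRemark} together with (\ref{Cond-girs1}) and a localization/stopping-time argument (stopping when $\int_0^t\!\int_\re Z_1^2\,dx\,ds$ exceeds a level $N$) to reduce to the bounded case, then pass to the limit. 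This is the first technical point but is essentially routine given the available moment estimates.

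Under $Q$ defined by $dQ/dP|_{\mathcal F_T^W}=L_T$, Theorem~\ref{girsanov} tells us that $\widetilde W(dt,dx)=Z_1(t,x)\,dt\,dx+W(dt,dx)$ is a space-time white noise. The key observation is that $u^1$, written in mild form (\ref{MSHE}) with noise $W$ and drift $b(s,y,u^1(s,y))=Z_1(s,y)\sigma(s,y,u^1(s,y))$, becomes under $\widetilde W$ a mild solution of the \emph{driftless} equation
\[
u^1(t,x)=G_tu_0(x)+\int_0^t\!\!\int_\re G_{t-s}(x-y)\,\sigma(s,y,u^1(s,y))\,\widetilde W(ds,dy),
\]
because the $Z_1\,dy\,ds$ term exactly cancels the drift. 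Thus, under $Q$, $u^1$ solves the driftless SPDE driven by $\widetilde W$. Next I would want the \emph{same} to hold for $u^2$ after a change of measure. The natural move is to run the analogous Girsanov transform with $Z_2(s,x):=Z(s,x,u^2(s,x))$, obtaining a measure $Q'$ and a white noise $\widetilde W'$ under which $u^2$ solves the driftless equation driven by $\widetilde W'$. The subtlety — and I expect this to be the main obstacle — is that $Q$ and $Q'$ use different Radon--Nikodym derivatives ($Z_1$ versus $Z_2$), so naively $u^1$ and $u^2$ end up being driftless solutions with respect to \emph{different} white noises on \emph{different} measures, and one cannot directly invoke pathwise uniqueness.

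To get around this I would proceed via the comparison theorem rather than a single Girsanov step. Introduce an auxiliary solution: on $(\Omega,\mathcal F,\mathcal F_t,P)$ consider the equation with noise coefficient $\sigma$ and drift $\widehat b(t,x,v):=Z(t,x,u^1(t,x))\,\sigma(t,x,v)$ — note this drift is $v$-Lipschitz once we fix the trajectory $u^1$? No; $Z(t,x,u^1(t,x))$ is an $\mathcal F_t^W$-predictable random field independent of the new unknown $v$, so $\widehat b$ is of the form (predictable field)$\times\sigma(t,x,v)$, which need not be Lipschitz in $v$. The cleaner route is: under $Q$ (from the $Z_1$-transform) both $u^1$ \emph{and} $u^2$ are $\mathcal C(\re_+,\mathcal C_{tem})$-valued solutions of (\ref{SHE}) driven by $\widetilde W$, where $u^1$ has zero drift and $u^2$ has drift $\big(Z(s,y,u^2(s,y))-Z_1(s,y)\big)\sigma(s,y,u^2(s,y))$; this residual drift is a predictable random field times $\sigma$, still of the comparison-theorem type once one checks the sign — but signs are not given. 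So instead I would symmetrize: apply Theorem~\ref{Theorem-Holder-Comp} twice. First, compare $u^1$ with the driftless solution $\bar u$ started from $u_0$ (drift $0\le$ residual drift? not known), so this also needs care. The honest plan, then, is: (1) use the $Z_1$-Girsanov transform so that under $Q$, $u^1$ solves the driftless equation with $\widetilde W$; (2) observe $u^2$ under $Q$ solves (\ref{MSHE}) with $\widetilde W$ and a drift $\widetilde b_2$ of the form $\widetilde Z\,\sigma$ with $\widetilde Z$ predictable; (3) invoke the existence of a driftless solution $v$ driven by $\widetilde W$ with $v(0,\cdot)=u_0$, and apply Theorem~\ref{Theorem-Holder-Comp} (with $b_1\equiv 0$, $b_2=\widetilde b_2^{\pm}$, splitting into the sets where the residual drift is nonnegative/nonpositive, using predictability to localize) to sandwich; and finally (4) invoke the driftless pathwise uniqueness of \cite{MP09} to conclude $u^1=v=u^2$ $Q$-a.s. on $[0,T]$, hence $P$-a.s. since $Q\sim P$ on $\mathcal F_T^W$, and let $T\uparrow\infty$. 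The delicate step throughout is handling the \emph{sign} of the residual drift $\big(Z(\cdot,u^2)-Z(\cdot,u^1)\big)\sigma(\cdot,u^2)$ so that the comparison theorem applies; I expect this is resolved by a bootstrap where one first shows one-sided ordering $u^1\le u^2$ (or $\ge$) using the comparison theorem on each of the two measures $Q,Q'$ and then combines the two inequalities — this sign/ordering bookkeeping, rather than any single estimate, is where the real work lies.
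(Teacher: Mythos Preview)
Your proposal has a genuine gap precisely at the point you flag as ``where the real work lies.'' The residual-drift sign problem you identify is not resolvable by the route you sketch: Theorem~\ref{Theorem-Holder-Comp} requires a global ordering $b_1(t,x,u)\le b_2(t,x,u)$ for all $(t,x,u)$, not merely on a predictable set, so ``splitting into the sets where the residual drift is nonnegative/nonpositive'' and then applying comparison on each piece is not a legitimate application of the theorem. Moreover, after your $Z_1$-Girsanov transform, the residual drift carried by $u^2$ is $\big(Z(\cdot,\cdot,u^2)-Z(\cdot,\cdot,u^1)\big)\sigma(\cdot,\cdot,u^2)$, a function of the \emph{solution pair} $(u^1,u^2)$ rather than a function $b_2(t,x,u)$ of the argument $u$ alone; it is not even of the form the comparison theorem accepts. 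The symmetrized bootstrap you hint at (two one-sided inequalities, one under each of $Q,Q'$) runs into the same obstruction.

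The paper sidesteps this entirely by decoupling the two solutions. Its argument has three ingredients: (i) using the comparison theorem \emph{only} to construct a strong solution---one mollifies $b$ to Lipschitz $b_m$, forms $\tilde b_{n,k}=\wedge_{m=n}^k b_m$, and the resulting monotone (in $k$ and in $n$) families of solutions converge, via Theorem~\ref{Theorem-Holder-Comp}, to a solution of (\ref{mildForm})$(b,\sigma)$, whose $\mathcal C(\re_+,\mathcal C_{tem})$ regularity is then verified; (ii) proving \emph{uniqueness in law} by a single Girsanov step applied to an arbitrary solution $v$ with its own $Z(\cdot,\cdot,v)$---under the new measure, $v$ solves the driftless equation, whose law is determined by \cite{MP09}, and the inverse Radon--Nikodym derivative recovers the law under $P$; (iii) invoking Kurtz's abstract Yamada--Watanabe--Engelbert theorem \cite{Kurtz2007}, which says that the existence of a strong solution together with uniqueness in law implies pathwise uniqueness. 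The localization you outline (stopping when $\int_0^t\!\int Z^2\,dx\,ds$ exceeds a level) is exactly how the paper reduces to the bounded case needed for Novikov, so that part of your plan is correct. What you are missing is the realisation that one never needs to compare $u^1$ and $u^2$ directly: strong existence plus uniqueness in law is enough.
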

One of the applications of Theorem \ref{Thm-UNIQ} is the pathwise uniqueness of solutions to (\ref{SHE}) in the special case where:
\bq \label{power-uniq}
\sigma(u) =|u|^{p} \textrm{ and } b(u)=-|u|^{q}, \textrm{ for some }\ 3/4< p<q \leq 1.
\eq
\paragraph{Notation.}
For $f\in \mathcal{C}(\re)$ set
$$\mathcal{C}_{rap}:=\{f\in \mathcal{C}(\re), \|f\|_\lam<\infty \ \ \textrm{for every} \ \  \lam<0\}.$$ The topology on this space is induced by the norms $\|\cdot \|_\lam$ for $ \lam<0$. 
Denote by $\mathcal{C}_{rap}^{+}$ (respectively $\mathcal{C}_{tem}^{+}$)  the set of nonnegative functions in $\mathcal{C}_{rap}$ (respectively $\mathcal{C}_{tem}$).  \medskip \\ 
The existence of a stochastically weak $\mathcal{C}(\re_{+},\mathcal{C}^{+}_{tem})$ solution to (\ref{SHE}) was proved in Theorem 1.1 of \cite{Shi94} for a larger class of $b$ and $\sigma$ which also includes our example in (\ref{power-uniq}). 
One can easily show that if $b$ and $\sigma$ satisfy (\ref{power-uniq}) and 
$u(0,\cdot)\in \mathcal{C}^{+}_{rap}$, then any $\mathcal{C}(\re_{+},\mathcal{C}^{+}_{tem})$ solution to (\ref{SHE}) is also in $\mathcal{C}(\re_{+},\mathcal{C}^{+}_{rap})$ (the proof follows the same lines as the proof of Theorem 2.5 in \cite{Shi94}). Under the assumption in (\ref{power-uniq}) we have
\bq \label{zsx1}
Z(s,x,u(s,x)):=|u(s,x)|^{q-p}, \ x\in \re, \ s \geq 0, \ \textrm{for every } u\in \mathcal{S}_{u_0}^W,
\eq
and we get that (\ref{Cond-girs1}) is satisfied for every $u\in \mathcal{S}_{u_0}^W$. From the discussion above and Theorem \ref{Thm-UNIQ} we get the following corollary.
\begin{corollary} \label{corr-l2}
Let $\dot{W}$ be a space-time white noise. Let $u(0,\cdot)$ be in $\mathcal{C}^{+}_{rap}$. Assume that $\sigma$ and $b$ are as in (\ref{power-uniq}). Then pathwise uniqueness holds for the solutions to (\ref{SHE}) with sample paths a.s. in $\mathcal{C}(\re_{+},\mathcal{C}^{+}_{tem})$.
\end{corollary}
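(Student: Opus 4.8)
The plan is to deduce Corollary \ref{corr-l2} directly from Theorem \ref{Thm-UNIQ}, by checking that the coefficients in (\ref{power-uniq}) satisfy all of its hypotheses as soon as the initial datum lies in $\mathcal{C}_{rap}^{+}$. Almost every hypothesis is a one-line verification; the only step that requires an actual argument is the Girsanov integrability condition (\ref{Cond-girs1}), and that argument rests on the fact recalled just before the corollary---proved along the lines of Theorem 2.5 in \cite{Shi94}---that a $\mathcal{C}(\re_{+},\mathcal{C}_{tem}^{+})$-solution of (\ref{SHE}) with coefficients (\ref{power-uniq}) and initial datum in $\mathcal{C}_{rap}^{+}$ in fact has sample paths in $\mathcal{C}(\re_{+},\mathcal{C}_{rap}^{+})$.

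First I would record the elementary properties of $\sigma(u)=|u|^{p}$ and $b(u)=-|u|^{q}$ for $3/4<p<q\le 1$: both are continuous in $(x,u)$, being independent of $x$; since $0<p,q\le 1$ we have $|u|^{p}+|u|^{q}\le 2(1+|u|)$, so (\ref{grow}) holds; and since $t\mapsto t^{p}$ is subadditive on $[0,\infty)$ when $p\in(0,1]$, $\big||u|^{p}-|u'|^{p}\big|\le\big||u|-|u'|\big|^{p}\le|u-u'|^{p}$, so (\ref{HolSigmaCon}) holds with $\gamma=p>3/4$, $R_{0}\equiv 1$ and $R_{1}=R_{2}=0$. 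For Assumption A, set $Z(s,x,u):=-|u|^{q-p}$ for $u\neq 0$ and $Z(s,x,0):=0$; since $q>p$ this is a deterministic, jointly continuous function, hence $\{\mathcal{F}_{t}^{W}\}$-predictable, and $Z(s,x,u)\,\sigma(s,x,u)=-|u|^{q}=b(u)$ (up to the harmless sign this is (\ref{zsx1})). Finally $u(0,\cdot)\in\mathcal{C}_{rap}^{+}\subset\mathcal{C}_{tem}$, as Theorem \ref{Thm-UNIQ} requires.

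It remains to verify (\ref{Cond-girs1}) for every $u\in\mathcal{S}_{u_{0}}^{W}$. By the $\mathcal{C}_{rap}$-propagation mentioned above, such a solution has paths in $\mathcal{C}(\re_{+},\mathcal{C}_{rap}^{+})$, so for every $T>0$ and $\mu>0$ the quantity $K_{\mu}:=\sup_{s\le T}\sup_{x\in\re}|u(s,x)|e^{\mu|x|}$ is finite $P$-a.s. Fixing one such $\mu$ and using $q-p>0$,
\[
\int_{0}^{T}\!\!\int_{\re}Z(s,x,u(s,x))^{2}\,dx\,ds=\int_{0}^{T}\!\!\int_{\re}|u(s,x)|^{2(q-p)}\,dx\,ds\le K_{\mu}^{2(q-p)}\,T\int_{\re}e^{-2(q-p)\mu|x|}\,dx<\infty,
\]
$P$-a.s., which is precisely (\ref{Cond-girs1}). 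With all the hypotheses of Theorem \ref{Thm-UNIQ} in hand, pathwise uniqueness holds for the $\mathcal{C}(\re_{+},\mathcal{C}_{tem})$-solutions of (\ref{SHE}); since any two $\mathcal{C}(\re_{+},\mathcal{C}_{tem}^{+})$-solutions are in particular $\mathcal{C}(\re_{+},\mathcal{C}_{tem})$-solutions, the corollary follows. There is no real obstacle here: the closest thing to one is the verification of (\ref{Cond-girs1}), which I have reduced to the cited propagation result, and the only points to watch are that (\ref{grow}) genuinely uses $p,q\le 1$ and that the finiteness of the last integral uses both $q>p$ (so the exponent $2(q-p)$ is positive) and the rapid spatial decay from the $\mathcal{C}_{rap}$-propagation, rather than the mere tem-bound of Remark \ref{MomentRemark}, which would not be integrable over $\re$.
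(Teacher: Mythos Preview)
Your proposal is correct and follows exactly the approach the paper sketches in the discussion preceding the corollary: verify the hypotheses of Theorem~\ref{Thm-UNIQ} for the coefficients in (\ref{power-uniq}), the only nontrivial check being (\ref{Cond-girs1}), which you---like the paper---reduce to the $\mathcal{C}_{rap}$-propagation cited from \cite{Shi94}. Your write-up is in fact more detailed than the paper's one-line ``from the discussion above and Theorem~\ref{Thm-UNIQ}'', and you correctly flag the sign in $Z$ that (\ref{zsx1}) suppresses.
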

\begin{remark}
 Recall that one of the necessary conditions for the pathwise uniqueness theorems in \cite{GyongyPardoux1993}, \cite{GyongyPardoux1993b}, \cite{Bally-Gyon-Pard1994} and \cite{Gyon1995}, is the non-degeneracy condition (\ref{non-deg}). One of the by products of Corollary \ref{corr-l2} is that (\ref{non-deg}) is not a necessary condition for pathwise uniqueness. 
 \end{remark}
The rest of this paper is devoted to the proofs of Theorems \ref{Theorem-Holder-Comp} and \ref{Thm-UNIQ}. In Section \ref{Sec-copm} we prove Theorem \ref{Theorem-Holder-Comp}. In Section \ref{Sec-uniq} we prove Theorem \ref{Thm-UNIQ}.
\section{Proof of Theorem \ref{Theorem-Holder-Comp}} \label{Sec-copm}
This section is devoted to the proof of Theorem \ref{Theorem-Holder-Comp}. The proof of a comparison theorem for SDEs with non-Lipschitz noise and drift coefficients was carried out by Nakao in \cite{Nakao1983}. The proof of Theorem \ref{Theorem-Holder-Comp} uses ideas from Nakao's proof.
First, let us introduce the following notation.
\paragraph{Notation.} We denote by $(i,j)( b,\sigma)$, equation $(i,j)$ with drift function $ b$ and white noise coefficient $ \sigma $. \medskip  \\
We will also use a more general notion of strong solution that was introduced in \cite{Kurtz2007}. Let $S_1$ and $S_2$ be Polish spaces and let $\Gamma:S_1\times S_2\rr \re$ be a Borel measurable function. Let $Y$ be an $S_2$-valued random variable with distribution $\nu$. We are interested in the solution $(X,Y)$  to the equation
\bq \label{gen-eqn}
\Gamma(X,Y) =0.
\eq
\begin{definition}[Definition 2.1 in \cite{Kurtz2007}]\label{def-sol-Kurtz}
A solution $(X,Y)$ to (\ref{gen-eqn}) is called a strong solution if there exists a Borel measurable function $F:S_2\rr S_1$ such that $X=F(Y)$, $P$-a.s.
\end{definition}
In our case $Y$ is the white noise process $W$ and we only consider strong solutions that are adapted to $\{\mathcal{F}_t\}_{t\geq 0}$, the natural filtration of the white noise. 
\begin{remark}  \label{extension-myt}
The existence and uniqueness results of Theorems 1.1 and 1.2 in \cite{MP09}, still hold if we assume that the drift coefficient $b(t,x,u,\omega):\re_{+}\times\re^2\times \Omega\rr \re$ in (\ref{SHE}) is an $\mathcal{F}_{t}$-predictable function which satisfy (\ref{grow}) and (\ref{LipsDrift}), $P$-a.s., where constants $C(T),B,$ in (\ref{grow}) and (\ref{LipsDrift}) are independent of $\omega$.
The proof of this generalisation follows directly from the proof in Section 8 of \cite{MP09} hence it is omitted. We can also get (\ref {Gen-Moment-Cond}) for the solutions of (\ref{SHE}) when $b$ and $\sigma$ satisfy the assumptions in Remark \ref{MomentRemark} and where $b$ is an $\mathcal{F}_{t}$-predictable function as above. 
\end{remark}
In order to prove Theorem \ref{Theorem-Holder-Comp}, we need the following additional notation. 
\paragraph{Notation.} 
With a slight abuse of notation set
\bq \label{brownian-sheet} 
W(t,x)=
\left\{
\begin{array}{ll}
\int_{0}^{t}\int_{y=0}^{x} W(dy,ds),  \  x \geq 0, \\\\
-\int_{0}^{t}\int_{y=x}^{0} W(dy,ds), \  x < 0.
\end{array}
\right.
\eq 
Note that $t \mapsto	 W(t,\cdot) \in  \mathcal{C}_{tem}$, $P$-a.s. This can be easily verified by checking the conditions of Lemma 6.3(i) in \cite{Shi94}. 

\paragraph{Proof of Theorem \ref{Theorem-Holder-Comp}:}
Let $u^i$ be two solutions of (\ref{SHE})$(\sigma,b_i), \ i=1,2$, with the same white noise on $(\Omega,\mathcal{F},\mathcal{F}_t,P)$, with sample paths in $\mathcal{C}(\re_{+},\mathcal{C}_{tem})$ a.s. Here $\{\mathcal{F}_t\}_{t\geq 0}$ is the natural filtration of the white noise process $W$. Assume that $u^i, \ i=1,2,$ have the deterministic initial conditions
\bq \label{init-cond-eq}
u^i(0,\cdot)=u^{i}_0(\cdot)\in \mathcal{C}_{tem}, \ i=1,2, 
\eq
which satisfy (\ref{cond-init}).
Note that by Theorem 1.3 in \cite{MP09}, there exists a unique strong solution to (\ref{SHE})$(\sigma,b_i)$, for $i=1,2$.   
By Definition \ref{def-sol-Kurtz}, this means that for each $i=1,2$, there exists a unique measurable function $F_i:\mathcal{C}(\re_{+},\mathcal{C}_{tem})\rr \mathcal{C}(\re_{+},\mathcal{C}_{tem})$,
such that $u_i=F_i(W), \ P$-a.s.$, \ i=1,2$ are $\mathcal{F}_t$-adapted (if $\tilde u_i$ is any other strong solution to (\ref{SHE})$(\sigma,b_i)$, then $\tilde u_i=F_i(W), \ P$-a.s. as well). \medskip \\
Note that in order to prove Theorem  \ref{Theorem-Holder-Comp} it is sufficient to show
\bd
F_1(W)(t,x)\leq F_2(W)(t,x), \ \forall t\geq 0, \ x\in\re,  \ P-\rm{a.s.}
\ed
Let $\Psi_n\in \mathcal{C}_c^{\infty}$ be a symmetric function so that $0\leq \Psi_n\leq 1$, $\|\Psi'_n\|_{\infty}\leq 1$, $\Psi_n(x)=1$ if $|x|\leq n$ and $\Psi_n(x)=0$ if $|x|\geq n+2$.
Let $$\sigma^n(t,x,u)=\int_{\re}\sigma(t,x,u')G_{2^{-n}}(u-u')\Psi_n(u')du'.$$ By the proof of Theorem 1.1 in \cite{MP09}, $\sigma^n(t,x,u)$ is Lipschitz continuous in $u$ and satisfies (\ref{grow}) for every $n\in \mathds{N}$. \\
Let
\be \label{MSHEn}
\left\{
\begin{array}{ll}
u_i^n(t,x)=G_{t}u_0(x) + \int_{0}^{t}\int_{\re}G_{t-s}(x-y)\sigma^n(s,y,u^{n}(s,y))W(ds,dy)   \\   \\
\quad \quad  \quad  \   \ + \int_{0}^{t}\int_{\re}G_{t-s}(x-y)b_i(s,y,u^{n}(s,y))dyds, \ x\in \re, \ t\geq 0, \   P-\rm{a.s.}, \ i=1,2,  \\   \\
u_i^n(0,x)= u_0(x), \ \ x\in \re, \ i=1,2. \medskip \\ 
\end{array}
\right.
\ee
From Theorem 2.2 in \cite{Shi94} we get that for every $i$ and $n$ there exists unique solution $u_i^n$ to (\ref{MSHEn}).
Let $Z_n=(u_1^n,u_2^n,W)$. Now argue as in the proof of Theorem 1.1 in \cite{MP09} that the family of laws $P^{Z_n}$  is tight in $\mathcal{C}(\re_{+},\mathcal{C}_{tem})^{3}$. Let $\{n_k\}_{k\geq 0}$ be a subsequence such that 
$\{Z_{n_k}\}_{k\geq 0}$ converges weakly to $Z=( u_1, u_2, W)$. 
By Skorohod's theorem there exists a probability space $(\widetilde\Omega,\widetilde {\mathcal{F}},\widetilde{\mathcal{F}}_t,\widetilde P)$ on which the sequence of processes  
$\{\widetilde Z_{n_k}\}_{k \geq 0}=\{(\tilde u_1^{n_k}, \tilde u_2^{n_k},\widetilde {W}^{n_{k}})\}_{k\geq 0} \myeq \{Z_{n_k}\}_{k \geq 0}$ is defined and converges $\widetilde {P}$-a.s. to $\widetilde Z=(\tilde u_1,\tilde u_2,\widetilde W)  \myeq ( u_1, u_2,W) $, hence $\widetilde Z$ is also in $\mathcal{C}(\re_{+},\mathcal{C}_{tem})^{3}$. Here $ \dot { \widetilde{W}}$ is a space-time white noise on $(\widetilde\Omega,\widetilde {\mathcal{F}},\widetilde{\mathcal{F}}_t,\widetilde P)$ and the Brownian sheet $\widetilde{W}$ is defined analogously to $W$ in (\ref{brownian-sheet}). Note that just as in the proof of Theorem 1.1 in \cite{MP09}, $\tilde u_i,\ i=1,2$ are weak solutions to  (\ref{SHE})$(\sigma,b_i)$ with $\widetilde W$.  \medskip \\
Since $b_1(t,x,u)\leq b_2(t,x,u)$, $P$-a.s. and $u^1_0(x)\leq u^2_0(x)$, we have from Corollary 2.4 in \cite{Shi94} that $u_1^{n_k}(t,x)\leq u_2^{n_k}(t,x)$ for all $t>0,\ x\in\re$, $P$-a.s., for every $k=1,2,\dots$
We get that
\bq \label{rr}
\tilde u_1(t,x)\leq \tilde u_2(t,x), \  \forall t>0,\ x\in\re,  \ \widetilde P-\rm{a.s.}
\eq
Now recall that by Theorem 1.3 in \cite{MP09},  $u_{i}$ are unique strong solutions to (\ref{SHE})$(\sigma,b_i)$ with $\widetilde W$. Hence, from (\ref{rr}) we get
\bd
F_1(\widetilde W)(t,x)\leq F_2(\widetilde W)(t,x), \ \forall t\geq 0, \ x\in\re,  \ \widetilde P-\rm{a.s.}
\ed
Therefore we conclude that
\bd
F_1( W)(t,x)\leq F_2( W)(t,x),  \ \forall t\geq 0, \ x\in\re,  \ P-\rm{a.s.}
\ed
and we are done. \qed

\section{Proof of Theorems \ref{Thm-UNIQ}} \label{Sec-uniq}
\subsection{Auxiliary Lemmas} \label{Auxiliary-Lemmas}
In this section we prove a few auxiliary lemmas that will be used in the proof of Theorem \ref{Thm-UNIQ}. Before we start with the proofs, we recall the distributional form of (\ref{SHE}). \medskip \\
Let $u$ be a solution of (\ref{SHE}) on $(\Omega,\mathcal{F},\mathcal{F}_t,P)$ with sample paths in $\mathcal{C}(\re_{+},\mathcal{C}_{tem})$, $P$-a.s. and the initial condition $u(0)=u_0\in C_{tem}$.
By Theorem 2.1 in \cite{Shi94}, (\ref{MSHE}) is equivalent to the distributional form of (\ref{SHE}). That is, 
\bq \label{mildForm}
\langle u(t),\phi \rangle &=& \langle u_0,\phi \rangle + \frac{1}{2}\int_{0}^{t}\big(\langle u(s),\frac{1}{2}\Delta \phi \rangle +\langle b(s,\cdot,u(s,\cdot), \phi \rangle \big) ds \nonumber \\
&&+ \int_{0}^{t}\int_{\re} \sigma(s,x,u(s,x))\phi(x)W(ds , dx), \ \ \forall t \geq 0, \ P-\rm{a.s.}, \ \forall \phi\in C_c^{\infty}(\re).
\eq
\paragraph{Convention:} Let $f:\re_{+}\times \re^{2}  \times \Omega \rr \re$ be an 
$\{\mathcal F_{t}\}_{t\geq 0}$-predictable function. We say that $f$ is continuous in the $x$ variable if 
\bn
x  \mapsto f(t,x,u) \ \textrm{ is continuous} \ \  \forall(t,u)\in \re_{+}\times \re , \ P-\rm{a.s.},
\en
and in the $u$ variable if 
\bq
u  \mapsto f(t,x,u) \ \textrm{ is continuous} \ \  \forall(t,x)\in \re_{+}\times \re, \ P-\rm{a.s.} 
\eq
Let $f_{n}:\re_{+}\times \re^{2}  \times \Omega \rr \re, \ n\geq 1,$ be a sequence of $\{\mathcal F_{t}\}_{t\geq 0}$-predictable functions. We say that $\{f_{n}\}_{n\geq 1}$ is monotone increasing sequence of functions if 
\bq \label{inc}
f_{n}(t,x,u) \leq f_{n+1}(t,x,u), \  \forall(t,x,u)\in \re_{+}\times \re^{2}, \ n\geq 1, \ P-\rm{a.s.} 
\eq
The definition of a decreasing sequence of functions is completely analogous to (\ref{inc}). We say that $\{f_{n}\}_{n\geq 1}$ is a monotone sequence of functions if it is either increasing or decreasing sequence.  \medskip \\
Before we start with the proof of Theorem \ref{Thm-UNIQ}, we will also need the following crucial lemma.
\begin{lemma}\label{lemma-aux}
Let $b_n:\re_{+}\times \re^{2}  \times \Omega \rr \re$, $n\geq 1$, be a sequence of monotone $\{\mathcal F_{t}\}_{t\geq 0}$-predictable functions such that for every $n\geq 1$, $b_{n}$ is continuous in $u$ the variable, and satisfy for some $T>0$,
\bq \label{unif-b-n}
\sup_{n\in \mathds{N}}|b_n(t,x,u,\omega)|\leq C(T)(1+|u|), \ \forall (t,x,u) \in [0,T]\times \re^2, \ P-\rm{a.s.}
\eq
Assume that
\bq \label{b-point} 
\lim _{n\rr \infty } b_n(t,x,u) = b(t,x,u),   \  \forall (t,x,u) \in [0,T] \times \re^{2} , \ P-\rm{a.s.},  
\eq
where $b:\re_{+}\times \re^{2} \times \Omega \rr \re$ is an $\{\mathcal F_{t}\}_{t\geq 0}$-predictable function which is continuous in the $u$ variable. Let $\sigma:\re_{+}\times \re \times \re  \rr \re$ be a continuous function in $u$ variable which satisfies $(\ref{grow})$. Assume that (\ref{mildForm})$(b_n,\sigma)$ admits jointly measurable $\mathcal{F}_t$-adapted solution $u_n$ such that
\bq \label{Moment-Cond}
E\big(\sup_{n}\sup_{t\in [0,T]}\sup_{x\in\re}|u_n(t,x)|^2e^{-\lam|x|}\big)< \infty,
\eq
and
\bq \label{bl1}
u_n(t,x)\rr u(t,x), \  \forall (t,x)\in [0,T]\times \re, \  P-\rm{a.s.},
\eq
as $n\rr\infty$, where $u:\re_{+}\times\re \rr \re$ is also a jointly measurable and $\mathcal{F}_t$-adapted process. Then $u(t,\cdot)$ satisfies (\ref{mildForm})$(b,\sigma)$ for any $t\in[0,T]$. 
\end{lemma}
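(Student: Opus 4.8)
The plan is to pass to the limit in the distributional form (\ref{mildForm})$(b_n,\sigma)$ term by term, using the monotonicity of $\{b_n\}$ together with the uniform moment bound (\ref{Moment-Cond}) to justify the interchange of limit and integration. Fix $\phi\in C_c^\infty(\re)$ and $t\in[0,T]$. For each $n$ we have
\bn
\langle u_n(t),\phi\rangle = \langle u_0,\phi\rangle + \int_0^t\Big(\tfrac12\langle u_n(s),\tfrac12\Delta\phi\rangle + \langle b_n(s,\cdot,u_n(s,\cdot)),\phi\rangle\Big)ds + \int_0^t\int_\re \sigma(s,x,u_n(s,x))\phi(x)W(ds,dx).
\en
The left-hand side converges to $\langle u(t),\phi\rangle$ by (\ref{bl1}) and dominated convergence (the dominating function comes from (\ref{Moment-Cond}) and the compact support of $\phi$). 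The Laplacian term is handled the same way. So the work is in the drift term and the stochastic integral.

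For the stochastic integral, I would show $\int_0^t\int_\re\big(\sigma(s,x,u_n(s,x))-\sigma(s,x,u(s,x))\big)\phi(x)\,W(ds,dx)\to 0$ in probability. By It\^o's isometry its second moment is $E\int_0^t\int_\re\big(\sigma(s,x,u_n(s,x))-\sigma(s,x,u(s,x))\big)^2\phi(x)^2\,dx\,ds$. Since $\sigma$ is continuous in $u$ and $u_n(s,x)\to u(s,x)$ pointwise a.s., the integrand tends to $0$ pointwise; by (\ref{grow}) it is bounded by $C(1+|u_n(s,x)|+|u(s,x)|)^2\phi(x)^2$, which is dominated (uniformly in $n$) by an integrable function thanks to (\ref{Moment-Cond}) and $\supp\phi$ compact, so a dominated-convergence argument on $[0,t]\times\re\times\Omega$ closes this step. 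The drift term is treated identically: $b_n(s,x,u_n(s,x))\to b(s,x,u(s,x))$ pointwise a.s. — here I would use that $b_n\to b$ pointwise \emph{and} $b$ is continuous in $u$, writing $b_n(s,x,u_n)-b(s,x,u) = \big(b_n(s,x,u_n)-b(s,x,u_n)\big) + \big(b(s,x,u_n)-b(s,x,u)\big)$ and controlling the first bracket by monotonicity (Dini-type argument, or directly since for a monotone sequence pointwise convergence plus the uniform bound (\ref{unif-b-n}) forces the values at the moving points $u_n\to u$ to converge, using continuity of $b$) — and bounded by $C(T)(1+|u_n(s,x)|)$ via (\ref{unif-b-n}), again dominated by (\ref{Moment-Cond}); so $\int_0^t\langle b_n(s,\cdot,u_n(s,\cdot)),\phi\rangle\,ds\to\int_0^t\langle b(s,\cdot,u(s,\cdot)),\phi\rangle\,ds$.

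\textbf{The main obstacle} I expect is the drift term, specifically justifying $b_n(s,x,u_n(s,x))\to b(s,x,u(s,x))$: we only know $b_n\to b$ at \emph{fixed} $u$, while the argument $u_n(s,x)$ also moves, so naive continuity is not enough — this is exactly where the monotonicity hypothesis on $\{b_n\}$ is indispensable. The clean way is: assume WLOG $b_n\uparrow b$; then for any $\delta>0$ and $m$ fixed, eventually $u_n(s,x)\in(u(s,x)-\delta,u(s,x)+\delta)$, and since each $b_m$ is continuous in $u$, $b_m(s,x,u_n(s,x))$ is close to $b_m(s,x,u(s,x))$ for large $n$; combined with $b_m\le b_n\le b$ for $n\ge m$ and $b_m(s,x,u(s,x))\uparrow b(s,x,u(s,x))$, one sandwiches $\liminf_n b_n(s,x,u_n(s,x))$ and $\limsup_n b_n(s,x,u_n(s,x))$ both at $b(s,x,u(s,x))$. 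Once this pointwise (in $(s,x,\omega)$) convergence is established, the dominated-convergence bound from (\ref{unif-b-n}) and (\ref{Moment-Cond}) finishes the term, and assembling the three limits gives that $u(t,\cdot)$ satisfies (\ref{mildForm})$(b,\sigma)$ for every $t\in[0,T]$ (first for each fixed $t$ outside a null set, then for all $t$ simultaneously using continuity of both sides in $t$).
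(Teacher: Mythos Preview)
Your proposal is correct and follows essentially the same route as the paper's proof: pass to the limit in each term of (\ref{mildForm})$(b_n,\sigma)$ separately, using dominated convergence (with (\ref{Moment-Cond}) and $\supp\phi$ compact) for the linear and stochastic-integral terms, and for the drift term splitting $b_n(s,x,u_n)-b(s,x,u)=\big(b_n(s,x,u_n)-b(s,x,u_n)\big)+\big(b(s,x,u_n)-b(s,x,u)\big)$, where the second bracket is handled by continuity of $b$ in $u$ and the first by the monotonicity of $\{b_n\}$. The paper phrases the first bracket via Dini's theorem, i.e.\ $\sup_{|u|\le M}|b_n(s,x,u)-b(s,x,u)|\to 0$; your sandwiching argument using $b_m\le b_n\le b$ for $n\ge m$ is an equivalent way to reach the same conclusion.
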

\begin{proof}
Note that from (\ref{Moment-Cond}) and (\ref{bl1}) it immediately follows that
\bq \label{Moment-Cond2}
E\big(\sup_{t\in [0,T]}\sup_{x\in\re}|u(t,x)|^2e^{-\lam|x|}\big)< \infty.
\eq
Let us show that $u$ solves (\ref{mildForm})$(b,\sigma)$. Let $\phi\in \mathcal{C}^{\infty}_c$, and fix $K>0$ such that 
\be \label{supp}
\supp(\phi)\subset [-K,K].
\ee
From (\ref{Moment-Cond})--(\ref{Moment-Cond2}) and dominated convergence we get
\bn
\lim_{n\rr \infty}E\big(|\langle u_n(t),\phi \rangle  - \langle u(t),\phi \rangle|\big)=0, \ \forall t\in[0,T], 
\en
and
\bn
\lim_{n\rr \infty}E \bigg( \bigg| \frac{1}{2}\int_{0}^{t}\langle u_n(s)-u(s),\frac{1}{2}\Delta \phi \rangle  ds \bigg| \bigg)=0, \ \forall t\in[0,T].
\en
Let us show the convergence of the drift term. Note that for every $t\in[0,T]$, 
\bq \label{fr0}
E\bigg(\bigg|\int_{0}^{t}\langle  b_n(s,\cdot,u_n(s,\cdot))-b(s,\cdot,u(s,\cdot)),\phi \rangle ds \bigg| \bigg)&\leq&
E\bigg(\bigg| \int_{0}^{t}\langle  b_n(s,\cdot,u_{n}(s,\cdot))-b(s,\cdot,u_{n}(s,\cdot)),\phi \rangle ds \bigg|\bigg)\nonumber \\
 &&+E\bigg(\bigg|\int_{0}^{t}\langle  b(s,\cdot,u_n(s,\cdot))-b(s,\cdot,u(s,\cdot)),\phi \rangle ds \bigg| \bigg)  \nonumber \\
 &&=:I_1+I_2.
\eq
From (\ref{unif-b-n}) and since $\{b_n\}_{n\geq 1}$ converges pointwise to $b$, $P$-a.s., we have
\bq \label{unif-b}
|b(t,x,u,\omega)|\leq C(T)(1+|u|), \ \forall (t,x,u) \in [0,T]\times \re^2, \ P-\rm{a.s.}
\eq
From (\ref{unif-b-n}) and (\ref{unif-b})  we have
\bq \label{rff1}
&& \sup_{s\in [0,T]}\sup_{x\in \re}\big(|b_{n}(s,x,u_{n}(s,x))-b(s,x,u_{n}(s,x))||\phi(x)|\big)  \nonumber \\
&&\leq  C(T)\sup_{s\in [0,T]}\sup_{x\in \re}\big((1+|u_{n}(s,x)|)|\phi(x)| \big)\nonumber \\
&&\leq  C(T)\bigg(1+\sup_{s\in [0,T]}\sup_{x\in \re}(|u_{n}(s,x)|e^{-\lam|x|})\bigg)
\bigg(\sup_{x\in \re}e^{\lam|x|}|\phi(x)| \bigg),  \ P-\rm{a.s.}
\eq
Since $\{b_{n}\}_{n\geq1}$ is a monotone sequence of continuous functions in the $u$ variable and $b$ is continuous in the $u$ variable, we get from (\ref{b-point}) that  for every $0<M<\infty$ we have 
\bq \label{b-unif-con} 
\lim_{n\rr\infty }\sup_{u\in [-M,M]}\big|b_{n}(s,x,u) -b(s,x,u)\big| =0, \  \forall (s,x)\in [0,T]\times \re,  \  P-\rm{a.s.}.
\eq
Use (\ref{Moment-Cond2}), (\ref{supp}), (\ref{rff1}), (\ref{b-unif-con}) and the dominated convergence theorem again to get,
\bq \label{fr1}
\lim_{n\rr\infty} I_1=0.
\eq
For $I_2$ we can use again (\ref{unif-b-n}) and (\ref{unif-b}) to get
\bq \label{rff2}
&& \sup_{s\in [0,T]}\sup_{x\in \re}\big(|b(s,x,u_n(s,x))-b(s,x,u(s,x))||\phi(x)| \big) \\
&&\leq  C(T)\sup_{s\in [0,T]}\sup_{x\in \re}\big((1+|u_n(s,x)|+|u(s,x)|)|\phi(x)|\big)  \nonumber \\
&&\leq C(T)\bigg(1+\sup_{s\in [0,T]}\sup_{x\in \re}|(u_n(s,x)|+|u(s,x)|)|e^{-\lam|x|}\bigg)\bigg(\sup_{x\in \re}e^{\lam|x|}|\phi(x)| \bigg), \ P-\rm{a.s.} \nonumber
\eq
Use (\ref{Moment-Cond}), (\ref{Moment-Cond2}), (\ref{supp}), (\ref{rff2}) and the dominated convergence theorem again to get,
\bq \label{rf2}
&&\lim_{n\rr\infty}E\bigg(\int_{0}^{t}\int_{\re}|b(s,x,u_n(s,x))-b(s,x,u(s,x))||\phi(x)| dx ds \bigg) \nonumber \\
&&=E\bigg(\int_{0}^{t}\int_{\re}\lim_{n\rr\infty}|b(s,x,u_n(s,x))-b(s,x,u(s,x))||\phi(x)| dx ds \bigg) \nonumber \\
&&=0,
\eq
where the last equality follows from (\ref{bl1}) and the continuity of $b$ in the $u$ variable. \medskip \\
From (\ref{rff2}) and (\ref{rf2}) we get
\bq \label{fr3}
\lim_{n\rr\infty} I_2=0.
\eq
From (\ref{fr0}), (\ref{fr1}) and (\ref{fr3}) we get
\bn
\lim_{n\rr\infty} E\bigg(\bigg| \int_{0}^{t}\langle  b_n(s,\cdot,u_n(s,\cdot))-b(s,\cdot,u(s,\cdot)),\phi \rangle ds \bigg|\bigg)=0,\ \forall t\in[0,T].
\en
Now let us handle the stochastic integral term. Denote by
\bq
M_t^n:=\int_{0}^{t}\int_{\re} \sigma(s,x,u_n(s,x))\phi(x)W(ds , dx), \ t>0,  \ n\geq 0,
\eq
and
\bq
M_t:=\int_{0}^{t}\int_{\re} \sigma(s,x,u(s,x))\phi(x)W(ds , dx), \ t>0, \ n\geq 0.
\eq
From (\ref{grow}) we have,
\bq \label{ineq61}
[(\sigma(s,x,u_n(s,x))-\sigma(s,x,u(s,x)))\phi(x)]^2 \leq C(T)(1+|u_n(t,x)|^2+|u(t,x)|^2)\phi(x)^2
\eq
From (\ref{supp}) we immediately get
\bq \label{ineq62}
&&(1+|u_n(t,x)|^2+|u(t,x)|^2)\phi(x)^2  \\
&&\leq  C(T)\bigg(1+\sup_{s\in [0,T]}\sup_{y\in \re}\big(|u_n(s,y)|^2+|u(s,y)|^2)e^{-|y|}
\big)\bigg)\bigg(\sup_{y\in \re} \phi(y)^2e^{|y|}\bigg) \mathds{1}_{\{x\in[-K,K]\}}, \ \forall t\in[0,T], \ x\in\re. \nonumber
\eq 
From (\ref{Moment-Cond}), (\ref{Moment-Cond2}), (\ref{ineq61}) and (\ref{ineq62}), dominated convergence and the continuity of $\sigma$ in the $u$ variable we have
\bq \label{ineq51}
\lim_{n\rr \infty} <M^n_\cdot-M_\cdot>_t &=&\lim_{n\rr \infty} E\bigg(\int_{0}^{T}\int_{\re} [\sigma(s,x,u_n(s,x))-\sigma(s,x,u(s,x))]^2\phi(x)^2dxds\bigg)    \\
&=&E\bigg(\int_{0}^{T}\int_{\re}\lim_{n\rr \infty}[\sigma(s,x,u_n(s,x))-\sigma(s,x,u(s,x))]^2\phi(x)^2dxds \bigg)\nonumber \\
&= & 0,\  \forall t\in[0,T].\nonumber
\eq 
From Theorem 3.1(1) in \cite{Kunita90} we get that the sequence of square integrable martingales $\{M^n\}_{n\geq 1}$ converges to $M$ in $L^2$, where $M$ is also a square integrable martingale, that is
\bn \label{varineq}
\lim_{n\rr\infty} E\bigg[\bigg(\int_{0}^{t}\int_{\re} \sigma(s,x,u_n(s,x))\phi(x)W(ds , dx)-\int_{0}^{t}\int_{\re} \sigma(s,x,u(s,x))\phi(x)W(ds , dx)\bigg)^2\bigg]=0,\ \forall t\in[0,T].
\en
and we are done.
\end{proof} \\ \\
The following lemma is a special case of equation (2.4e) in \cite{rosen}. 
\begin{lemma} \label{rosen}
There exist constants $C_{\ref{rosen}},C'_{\ref{rosen}}>0$ such that,
\bn
\big|G_{t}(x)-G_{t}(y) \big| &\leq& C_{\ref{rosen}}|x-y|t^{-1}\Big(e^{-C'_{\ref{rosen}}x^{2}/t}+e^{-C'_{\ref{rosen}}y^{2}/t}\Big), \  \forall t>0, \ x,y\in \re. 
 \en
 \end{lemma}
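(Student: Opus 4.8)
The plan is to reduce the bound to a one–dimensional calculus estimate via the mean value theorem applied to the Gaussian kernel $G_t$. Write $G_t(x) = (2\pi t)^{-1/2} e^{-x^2/2t}$. By the mean value theorem, for fixed $t>0$ and $x,y\in\re$ there is a point $\xi$ between $x$ and $y$ such that
\bn
G_t(x) - G_t(y) = G_t'(\xi)(x-y) = -\frac{\xi}{t}\,G_t(\xi)\,(x-y),
\en
so that $|G_t(x)-G_t(y)| = \frac{|\xi|}{t}\,G_t(\xi)\,|x-y|$. The task is then to bound $\frac{|\xi|}{t}G_t(\xi)$ by $C t^{-1}\big(e^{-C' x^2/t} + e^{-C' y^2/t}\big)$ uniformly.

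First I would dispose of the prefactor $|\xi| G_t(\xi)$: using the elementary inequality $|\xi| e^{-\xi^2/2t} \le \sqrt{t}\,e^{-\xi^2/4t}$ (which follows from $\sup_{r\ge 0} r e^{-r^2/4} < \infty$ after the substitution $r = \xi/\sqrt t$), we get
\bn
\frac{|\xi|}{t}\,G_t(\xi) = \frac{|\xi|}{t}\,\frac{1}{\sqrt{2\pi t}}\,e^{-\xi^2/2t} \le \frac{C_1}{t}\,e^{-\xi^2/4t}
\en
for an absolute constant $C_1$. Next, since $\xi$ lies between $x$ and $y$, we have $|\xi| \ge \min(|x|,|y|)$ whenever $x,y$ have the same sign, and in general $\xi^2 \ge \min(x^2,y^2)$ is \emph{not} always true; the clean fact that does hold is $\min(|x|,|y|)\le|\xi|\le\max(|x|,|y|)$, hence $\xi^2 \le \max(x^2,y^2)$, which is the wrong direction. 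To get a usable bound I would instead note that $e^{-\xi^2/4t} \le e^{-x^2/8t} + e^{-y^2/8t}$: indeed if $|\xi|\ge|x|/2$ then $e^{-\xi^2/4t}\le e^{-x^2/16t}$, and otherwise $|\xi|<|x|/2$ forces $|y|\ge|\xi|$... this case analysis is the one slightly delicate point, and the robust way around it is the following. Since $\xi$ is between $x$ and $y$, at least one of $|\xi|\ge|x|/2$ or $|\xi|\ge|y|/2$ holds — because if both failed we would have $|x|,|y| > 2|\xi|$, impossible as $\xi$ lies in the closed interval with endpoints $x,y$ (so $|\xi|\ge\min(|x|,|y|)$ when the endpoints share a sign, and when they straddle $0$ the claim $\min(|x|,|y|)$ could be small, but then $|\xi|$ can indeed be near $0$ while both $|x|,|y|$ are moderate — the escape is that in that straddling case $|x-y| = |x|+|y|$ is large, but we are not using that). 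Cleanest: just check directly that $e^{-\xi^2/4t}\le e^{-x^2/16t}+e^{-y^2/16t}$ for any $\xi\in[\min(x,y),\max(x,y)]$ by splitting on whether $\xi^2\ge x^2/4$.

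Combining, $|G_t(x)-G_t(y)| \le \frac{C_1}{t}\big(e^{-x^2/16t}+e^{-y^2/16t}\big)|x-y|$, which is the assertion with $C_{\ref{rosen}} = C_1$ and $C'_{\ref{rosen}} = 1/16$. I expect the only real obstacle to be the bookkeeping in the case split showing $e^{-\xi^2/4t}\le e^{-C' x^2/t}+e^{-C' y^2/t}$; everything else (the mean value theorem, the scalar bound $re^{-r^2/4}\le C$) is routine. Alternatively, one can simply cite equation (2.4e) of \cite{rosen} directly, as the statement indicates this is a special case of that estimate, but the self-contained argument above is short enough to include.
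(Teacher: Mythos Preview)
The paper does not give a proof of this lemma at all; it simply records that the estimate is a special case of equation (2.4e) in \cite{rosen}. So there is nothing to compare against, and your option of just citing Rosen matches the paper exactly.

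Your self-contained argument via the mean value theorem is the natural one, but it has a genuine gap at precisely the point you flag as ``slightly delicate''. The pointwise inequality you ultimately rely on,
\[
e^{-\xi^2/4t}\le e^{-x^2/16t}+e^{-y^2/16t}\quad\text{for every }\xi\in[\min(x,y),\max(x,y)],
\]
is \emph{false}. Take $t=1$, $x=10$, $y=-10$, $\xi=0$: the left side equals $1$ while the right side is $2e^{-25/4}<0.01$. Your proposed case split ``on whether $\xi^2\ge x^2/4$'' does not rescue this example (here $\xi^2=0<x^2/4$, but the conclusion still fails since $|\xi|$ is also far below $|y|/2$). You correctly sense that the straddling case is the problem and even note that $|x-y|=|x|+|y|$ is large there, but then do not use that observation.

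The clean repair is to exploit the evenness of $G_t$ \emph{before} invoking the mean value theorem. Since $G_t(x)=G_t(|x|)$ and $G_t(y)=G_t(|y|)$, one has $|G_t(x)-G_t(y)|=|G_t(|x|)-G_t(|y|)|$, and by the reverse triangle inequality $\big||x|-|y|\big|\le|x-y|$. Now apply the mean value theorem to the nonnegative pair $|x|,|y|$: the intermediate point $\xi'$ lies in $[\min(|x|,|y|),\max(|x|,|y|)]$, so $(\xi')^2\ge\min(x^2,y^2)$ and hence $e^{-(\xi')^2/4t}\le e^{-x^2/4t}+e^{-y^2/4t}$. Combined with your (correct) bound $\frac{|\xi'|}{t}G_t(\xi')\le \frac{C_1}{t}e^{-(\xi')^2/4t}$, this yields the lemma with $C'=1/4$. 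With this one-line reduction inserted, the rest of your argument is fine.
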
  
 We will also need the following lemma.
\begin{lemma} \label{Lemma-Shiga}
Let $T>0$ and $\lam\geq 0$. There exists a constant $C_{\ref{Lemma-Shiga}}(\lam,T)>0$ such that,
\bq \label{int-g}
 \int_{0}^{t\vee t'}\int_{\re} e^{\lam|y|}(G_{t'-s}(x'-y)-G_{t-s}(x-y))^2 dy ds &\leq& C_{\ref{Lemma-Shiga}}(T,\lam) e^{\lam|x|}e^{\lam|x-x'|}(|t'-t|^{1/2}+|x'-x|),  \nonumber \\
 && \forall 0 \leq t,t' \leq T, \ x,x'\in \re, 
\eq
where $G_{t}(x-y)=0$ for $t\leq 0$.
\end{lemma}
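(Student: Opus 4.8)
# Proof Proposal for Lemma \ref{Lemma-Shiga}

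The plan is to split the integrand using the triangle-inequality-type bound
$(G_{t'-s}(x'-y) - G_{t-s}(x-y))^2 \leq 2(G_{t'-s}(x'-y) - G_{t'-s}(x-y))^2 + 2(G_{t'-s}(x-y) - G_{t-s}(x-y))^2$,
so that the spatial increment and the temporal increment are handled separately. WLOG assume $t \leq t'$. For the spatial part I would use Lemma \ref{rosen}: $(G_{t'-s}(x'-y)-G_{t'-s}(x-y))^2 \leq C |x'-x|^2 (t'-s)^{-2}(e^{-2C'(x'-y)^2/(t'-s)} + e^{-2C'(x-y)^2/(t'-s)})$. Multiplying by $e^{\lambda|y|}$ and integrating in $y$ produces a Gaussian integral of the form $\int_{\re} e^{\lambda|y|} (t'-s)^{-2} e^{-c(x-y)^2/(t'-s)}\,dy$, which is $O((t'-s)^{-3/2} e^{\lambda|x|} e^{C(t'-s)})$ after completing the square (the shift by $\lambda$ contributes the $e^{\lambda|x|}$ and a bounded-on-$[0,T]$ factor). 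Then $\int_0^{t'} (t'-s)^{-3/2}\,ds$ diverges, so one must instead only integrate over $s \in [0, t' - |x'-x|^2]$ using the Lemma-\ref{rosen} bound, which gives $|x'-x|^2 \int_{|x'-x|^2}^{t'} r^{-3/2}\,dr \leq 2|x'-x|^2 \cdot |x'-x|^{-1} = 2|x'-x|$; and on the remaining interval $s \in [t' - |x'-x|^2, t']$ use instead the crude bound $(G_{t'-s}(x'-y)-G_{t'-s}(x-y))^2 \leq 2 G_{t'-s}(x'-y)^2 + 2G_{t'-s}(x-y)^2$, each term integrating (against $e^{\lambda|y|}$) to $O((t'-s)^{-1/2} e^{\lambda|x|})$, whose integral over an interval of length $|x'-x|^2$ is $O(|x'-x|)$. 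This yields the $|x'-x|$ term with the correct $e^{\lambda|x|}$ prefactor; the extra $e^{\lambda|x-x'|}$ factor appears when one bounds $e^{\lambda|y|} \leq e^{\lambda|x'|}e^{\lambda|x'-y|} \leq e^{\lambda|x|}e^{\lambda|x-x'|}e^{\lambda|x'-y|}$ to keep everything centered consistently.

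For the temporal part, $\int_0^{t'}\int_{\re} e^{\lambda|y|}(G_{t'-s}(x-y) - G_{t-s}(x-y))^2\,dy\,ds$, the standard approach is the semigroup/Fourier computation: using $\widehat{G_r}(\xi) = e^{-r\xi^2/2}$, the integral $\int_{\re}(G_{a}(z) - G_{b}(z))^2\,dz$ (with $z = x-y$, ignoring the weight momentarily) equals $\frac{1}{2\pi}\int_{\re}(e^{-a\xi^2/2} - e^{-b\xi^2/2})^2\,d\xi$, which after the substitution and using $(e^{-a\xi^2/2}-e^{-b\xi^2/2})^2 \leq$ elementary bounds gives something of order $(|t'-t|^{1/2} + \text{lower order})$ after integrating in $s$ as well. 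A cleaner route avoiding Fourier: write $G_{t'-s}(x-y) - G_{t-s}(x-y) = \int_{t-s}^{t'-s} \partial_r G_r(x-y)\,dr$ and use $\partial_r G_r = \frac12 \Delta G_r$ together with the known bound $|\partial_r G_r(z)| \leq C r^{-3/2} e^{-cz^2/r}$; one then splits the $s$-integral at $t - |t'-t|$ exactly as in Shiga's original Lemma 6.2(ii) in \cite{Shi94}. Actually, the simplest is to cite that this temporal estimate is precisely (a weighted version of) the content of Lemma 6.2 in \cite{Shi94}, from which the $|t'-t|^{1/2}$ bound with the exponential weights follows; I would reproduce the short argument: split $\int_0^{t-|t'-t|} + \int_{t-|t'-t|}^{t'}$, bound the first using the gradient estimate giving $|t'-t|^2 \int_{|t'-t|}^{t} r^{-3} \cdot r^{1/2}\,dr$-type expressions that evaluate to $O(|t'-t|^{1/2})$, and the second using $(a-b)^2 \leq 2a^2 + 2b^2$ with $\int G_r(z)^2\,dz = O(r^{-1/2})$ integrated over a length-$|t'-t|$ interval, giving $O(|t'-t|^{1/2})$ again.

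Throughout, the weight $e^{\lambda|y|}$ is controlled by the elementary inequality $\int_{\re} e^{\lambda|y|} G_r(x-y)^k\,dy \leq C(k,\lambda,T) r^{(1-k)/2} e^{\lambda|x|}$ for $r \in (0,T]$, $k \in \{1,2\}$, obtained by completing the square; this is where the $e^{\lambda|x|}$ on the right-hand side originates, and the auxiliary factor $e^{\lambda|x-x'|}$ absorbs the discrepancy between centering Gaussians at $x$ versus $x'$. Combining the spatial estimate (bounded by $C e^{\lambda|x|} e^{\lambda|x-x'|}|x'-x|$) with the temporal estimate (bounded by $C e^{\lambda|x|}|t'-t|^{1/2}$) and taking $C_{\ref{Lemma-Shiga}}(T,\lambda)$ to be the larger constant gives (\ref{int-g}). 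The main obstacle is purely bookkeeping: one must choose the split points in the $s$-integral correctly (at $t' - |x'-x|^2$ for the spatial part and at $t - |t'-t|$ for the temporal part) so that the singular factors $(t'-s)^{-3/2}$ and $(t'-s)^{-1/2}$ integrate to the claimed powers rather than diverging, and one must track the exponential weights carefully so the prefactor comes out as $e^{\lambda|x|}e^{\lambda|x-x'|}$ and not something worse. No genuinely deep step is involved — this is the classical Shiga-type heat-kernel estimate, adapted to include the exponential weight.
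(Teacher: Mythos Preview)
Your proposal is correct and is precisely the standard Shiga-type argument that the paper is tacitly invoking. In fact the paper does not give an independent proof at all: it simply observes that the case $\lambda=0$ is Lemma~6.2(i) of \cite{Shi94} (with further details in the proof of Theorem~6.7 in \cite{SPDEbook09}) and asserts that the case $\lambda>0$ ``follows the same lines and hence it is omitted.'' Your sketch supplies exactly those omitted lines --- the triangle-inequality split into spatial and temporal increments, the use of Lemma~\ref{rosen} for the spatial piece, the $s$-integral splits at $t'-|x'-x|^{2}$ and $t-|t'-t|$, and the completion-of-the-square step $\int_{\re}e^{\lambda|y|}G_{r}(x-y)^{k}\,dy\le C(k,\lambda,T)\,r^{(1-k)/2}e^{\lambda|x|}$ that produces the weighted prefactor. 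There is no methodological difference to report; you have simply written out what the paper leaves implicit.
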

\begin{proof}
Lemma \ref{Lemma-Shiga} appears in \cite{Shi94} (see Lemma 6.2(i)) for the case where $\lam=0$. More details on proof of (\ref{int-g}) when $\lam=0$, are given in the proof of Theorem 6.7 in Chapter 1.6 of \cite{SPDEbook09}. The proof for the case where $\lam>0$ follows the same lines and hence it is omitted.
\end{proof}
\subsection{Proof of Theorem \ref{Thm-UNIQ}}
\paragraph{Step 1. Construction of a Solution}
In this step we construct a solution to (\ref{mildForm}). Later we will show that this solution is the unique solution of (\ref{SHE}).
\medskip \\
We assume that $\sigma$ satisfies (\ref{HolSigmaCon}) and both $\sigma$ and $b$ are continous in $(x,u)$ and satisfy (\ref{grow}).
Let $\Psi_n$ be as in the proof of Theorem \ref{Theorem-Holder-Comp}. For any $m\in\mathds{N}$, define
\bq \label{b-m}
b_m(t,x,u,\omega)=\int_{\re}b(t,x,u',\omega)G_{2^{-m}}(u-u')\Psi_m(u')du', \ P-\rm{a.s.}
\eq
Let
\bq \label{b-n-k}
\tilde b_{n,k}&:=&\wedge_{m=n}^{k}b_m, \ n\leq k, \nonumber \\
\tilde b_n&:=&\wedge_{m=n}^{\infty}b_m.
\eq
Fix an arbitrary $T>0$. As a direct consequence of (\ref{grow}) we have,
\bq \label{b-m-grow}
|b_m(t,x,u)|\leq C_{\ref{grow}}(T)(1+|u|),  \ \forall (t,x,u)\in [0,T]\times\re\times \re, \ m\in \mathds{N}, \ P-\rm{a.s.}
\eq
Again from (\ref{grow}) and Lemma \ref{rosen} we have 
\bq \label{b-m-lip}
|b_m(t,x,u)-b_m(t,x,u')|\leq C(m)|u-u'|, \ \forall (t,x,u)\in [0,T]\times\re\times \re , \ m\in \mathds{N}, \ P-\rm{a.s.} 
\eq
From (\ref{b-m}) follows that 
\bq \label{pointwise}
b_{m}(t,x,u)\rr b(t,x,u),
\eq
pointwise for any $(t,x,u)\in [0,T] \times \re\times \re$, $P$-a.s. 
From (\ref{b-n-k}), (\ref{b-m-lip}) and (\ref{pointwise}) we can easily get $\tilde b_{n,k}$ is Lipschitz in $u$ uniformly with respect to $(t,x)\in [0,T]\times \re $ and
\bq \label{bn-cnov}
\tilde b_{n,k} &\dr&  \tilde b_n, \ \textrm{as } k\rr \infty, \nonumber \\
\tilde b_n &\uparrow & b, \ \textrm{as } n\rr \infty,
\eq
for any $(t,x,u)\in [0,T]\times \re^{2}, \ P$-a.s.  \medskip \\
By Theorems 1.2 and 1.3 in \cite{MP09}, there exists a unique strong $\mathcal{C}(\re_{+},\mathcal{C}_{tem})$-valued solution to (\ref{SHE})$(\tilde b_{n,k},\sigma)$.
Denote by $\tilde u_{n,k}$ the solution of (\ref{SHE})$(\tilde b_{n,k},\sigma)$. From Theorem \ref{Theorem-Holder-Comp} we get that the sequence $\{\tilde u_{n,k}\}$ decreases with $k$, hence it has a $P$-a.s. pointwise limit
\bq \label{un-const}
u_n:=\lim_{k\rr \infty}\tilde u_{n,k}.
\eq
Note that $u_n$ is also jointly measurable, $\mathcal{F}_t$-adapted process since it is an infimum of the jointly measurable, $\mathcal{F}_t$-adapted processes $\{\tilde u_{n,k}\}_{k\geq 1}$.
Denote by $\bar b (u):= C_{\ref{grow}}(T)(1+|u|)$ and note that $\bar b$ is Lipschitz uniformly in $u$ and satisfies trivially (\ref{grow}).
From Theorems 1.2 and 1.3 in \cite{MP09} we get that there exists a unique strong $\mathcal C(\re_{+},\mathcal{C}_{tem})$ solution to (\ref{SHE})$(\bar b,\sigma)$ and to (\ref{SHE})$(-\bar b ,\sigma)$. Denote by $\bar u$ ($\underline{u}$, respectively) the solution to (\ref{SHE})$(\bar b,\sigma)$ (the solution to (\ref{SHE})$(-\bar b ,\sigma)$, respectively).  \medskip \\
From Theorem \ref{Theorem-Holder-Comp} and (\ref{b-m-grow}) we have
\bq \label{ineq11}
\underline{u}(t,x) \leq \tilde u_{n,k}(t,x)\leq \bar{u}(t,x), \ \forall x\in\re, \ t\in[0,T], \ k, n\in \mathds{N}, \ k \geq n, \ P-\rm{a.s.}
\eq
From the fact that $\underline{u}(t,x),  \bar{u}(t,x)\in \mathcal C(\re_{+},\mathcal{C}_{tem})$ and (\ref{Gen-Moment-Cond}) we get
\bq \label{ineq111}
E\big(\sup_{t\in [0,T]}\sup_{x\in\re}(|\underline u(t,x)|^p+|\bar u(t,x)|^p)e^{-\lam|x|}\big)< \infty, \ \forall \lam, \ p>0.
\eq
Furthermore, from (\ref{ineq11}) and (\ref{ineq111}) we get
\bq \label{fact}
E\big(\sup_{n}\sup_{k\geq n}\sup_{t\in [0,T]}\sup_{x\in\re}(|\tilde u_{n,k}(t,x)|^p+|u_n(t,x)|^p)e^{-\lam|x|}\big)< \infty, \ \forall \lam, \ p>0.
\eq
Note that for every $n\geq 1,$ the sequence $\{\tilde b_{n,k}\}_{k\geq n}$ is uniformly bounded and equicontinuous in the $u$ variable, therefore from (\ref{bn-cnov}) we get that $\tilde b_{n}$ is continuous in the $u$ variable. From (\ref{b-m-grow}), (\ref{bn-cnov}), (\ref{un-const}), (\ref{fact}) and Lemma \ref{lemma-aux}, we get that $u_n$ solves (\ref{mildForm})$(\tilde b_n,\sigma)$.  \medskip \\
We would like to construct our solution to (\ref{mildForm})$(\sigma,b)$ as the limit of $u_n$, as $n\rr \infty$. From Theorem \ref{Theorem-Holder-Comp} we get
\bq \label{ineq1}
\tilde u_{n,k}\geq \tilde u_{m,k}, \ \forall m\leq n\leq k,
\eq
and hence $u_n$ increases as $n$ increases. Since $\{u_n\}_{n\geq 1}$ is an increasing sequence of jointly measurable, $\mathcal{F}_t$-adapted processes, we get that $u_n$ converges pointwise:
\bq \label{u-lim}
u(t,x):=\lim_{n\rr \infty} u_n(t,x),\  \forall (t,x)\in [0,T]\times \re, \ P-\rm{a.s.}
\eq
and that $u$ is also jointly measurable, $\mathcal{F}_t$-adapted process.
Therefore by (\ref{b-m-grow}), (\ref{bn-cnov}), (\ref{fact}), (\ref{u-lim}) and Lemma \ref{lemma-aux} we get that $u$ solves (\ref{mildForm})$(b,\sigma)$. From (\ref{fact}) and (\ref{u-lim}) we get that
\bq \label{fact22}
E\big(\sup_{t\in [0,T]}\sup_{x\in\re}|u(t,x)|^pe^{-\lam|x|}\big)< \infty, \ \forall \lam, \ p>0.
\eq
\paragraph{Step 2. Continuity of the Constructed Strong Solution}
In this step we prove that the strong solution constructed in Step 1 has a modification which is jointly continuous. Note that the continuity of $u$ together with (\ref{fact22}) implies that $u\in \mathcal{C}(\re_{+},\mathcal{C}_{tem})$. The proof uses ideas from the proof of Theorem 2.2 in \cite{Shi94}. \medskip \\
Let
\bq  \label{Xnk}
X_{n,k}(t,x)=\int_{0}^{t}\int_{\re}G_{t-s}(x-y)\sigma(s,y,\tilde u_{n,k}(s,y))W(ds,dy), \ \forall x\in\re, \  t\geq 0, \ n,k\in \mathds{N}, \ n \leq k.
\eq
Let $\lam>0$ be an arbitrary constant. Apply Burkholder's inequality and (\ref{grow}) to (\ref{Xnk}) to get for every $p>1$,
\bq \label{rr1}
&& E((X_{n,k}(t',x')-X_{n,k}(t,x))^{2p}) \nonumber \\
&&\leq  C(p,T)E\bigg[\bigg(\int_{0}^{t\vee t'}\int_{\re}e^{2\lam|y|}(G_{t'-s}(x'-y)-G_{t-s}(x-y))^2(1+e^{-\lam|y|}[\tilde u_{n,k}(s,y)]^{2})dy ds\bigg)^{p}\bigg],  \nonumber \\
&&\quad \forall 0 \leq t,t' \leq T, \ x,x'\in \re, \  n,k\in \mathds{N}, \ n \leq k.
\eq
Use (\ref{fact}) and Lemma \ref{Lemma-Shiga} on (\ref{rr1}) to get that there exists a constant $C_{(\ref{rr2})}(\lam,p,T)>0$ independent of $n,k$ such that
\bq \label{rr2}
E((X_{n,k}(t',x')-X_{n,k}(t,x))^{2p}) &\leq&  C_{(\ref{rr2})}(\lam,p,T)e^{2\lam p|x|}e^{2\lam p|x-x'|}(|t'-t|^{p/2}+|x'-x|^p), \nonumber \\
 && \forall 0 \leq t,t' \leq T, \ x,x'\in \re, \  n,k\in \mathds{N}, \ n \leq k.
\eq
Let
\bq  \label{Ynk}
Y_{n,k}(t,x)=\int_{0}^{t}\int_{\re}G_{t-s}(x-y)b_{n,k}(s,y,\tilde u_{n,k}(s,y))dy ds, \ \forall x\in\re, \  t\geq 0, \ n,k\in \mathds{N}, \ n \leq k.
\eq
From (\ref{b-n-k}) and (\ref{b-m-grow}) we have
\bq \label{grow-bnk}
|b_{n,k}(t,x,u)|\leq C_{(\ref{grow})}(T)(1+|u|),  \ \forall (x,u)\in \re\times \re, \ t\in[0,T] , \ n,k\in \mathds{N}, \ n \leq k, \ P-\rm{a.s.}
\eq
Use (\ref{grow-bnk}), (\ref{fact}), Jensen's inequality and Lemma \ref{Lemma-Shiga} to get that there exists a constant $C_{(\ref{rr4})}(\lam,p,T)>0$ independent of $n,k$ such that
\bq \label{rr4}
&& E((Y_{n,k}(t',x')-Y_{n,k}(t,x))^{2p})   \\
&&\leq   C(p,T)\big(1+E\big(\sup_{k\geq n}\sup_{s\in [0,T]}\sup_{y\in\re}e^{-2p\lam |y|}|\tilde u_{n,k}(s,y)|^{2p}\big)\big) \nonumber \\
&&\quad \times \bigg(\int_{0}^{t\vee t'}\int_{\re}e^{2\lam|y|}|G_{t'-s}(x'-y)-G_{t-s}(x-y)|e^{-\lam  |y|}dy ds\bigg)^{2p} \nonumber \\
&&\leq   C(\lam,p,T)\bigg(\int_{0}^{t\vee t'}\int_{\re}e^{4\lam|y|}(G_{t'-s}(x'-y)-G_{t-s}(x-y))^2 e^{-\lam|y|}dy ds\bigg)^{p} \nonumber \\
&&\leq C_{(\ref{rr4})}(\lam,p,T)e^{3\lam p |x|}e^{3\lam p |x-x'|}(|t'-t|^{p/2}+|x'-x|^p),\
 \forall 0 \leq t,t' \leq T, \ x,x'\in \re, \  n,k\in \mathds{N}, \ n \leq k. \nonumber
\eq
Recall that $u_{0}\in \mathcal{C}_{tem}$. Then from Jensen's inequality and Lemma \ref{Lemma-Shiga} we get that there exists $C_{(\ref{rr5})}(\lam,p,T)>0$ such that,
\bq \label{rr5}
|G_{t}u_0(x)-G_{t'}u_0(x')|^{2p}&=&\Bigg|\int_{\re}\big(G_{t}(x-y)-G_{t'}(x'-y)\big)e^{2\lam|y|}u_{0}(y)e^{-2\lam|y|}dy\Bigg|^{2p} \\
&\leq&\Big(\sup_{y\in \re}|u_{0}(y)|e^{-\lam|y|}\Big)^{2p}\bigg|\int_{\re}\big |G_{t}(x-y)-G_{t'}(x'-y)\big|e^{2\lam|y|}e^{-\lam|y|}dy\bigg|^{2p}\nonumber \\
&\leq&C(\lam,p)\bigg|\int_{\re}\big(G_{t}(x-y)-G_{t'}(x'-y)\big)^{2}e^{4\lam|y|}e^{-\lam|y|}dy\bigg|^{p}\nonumber \\
&\leq&C_{(\ref{rr5})}(\lam,p,T)e^{3\lam p |x|}e^{3\lam p |x-x'|}(|t'-t|^{p/2}+|x'-x|^p),\
 \forall 0 \leq t,t' \leq T, \ x,x'\in \re. \nonumber
\eq
Recall that $\tilde u_{n,k}$ for any $n\leq k$ is a solution to (\ref{MSHE})$(\tilde b_{n,k},\sigma)$, therefore,
\bq \label{rr6}
\tilde u_{n,k}(t,x)=G_{t}u_0(x)+Y_{n,k}(t,x)+X_{n,k}(t,x), \  \forall x\in \re, \ t\geq 0, \  n,k\in \mathds{N}, \ n \leq k.
\eq
From (\ref{fact}), (\ref{rr2}), (\ref{rr4}), (\ref{rr5}) and dominated convergence (applied twice) we have
\bq \label{rr7}
 &&E((u(t',x')-u(t,x))^{2p}) \nonumber \\
&&=\lim_{n\rr\infty} \lim_{k\rr\infty} E((\tilde u_{n,k}(t',x')- \tilde u_{n,k}(t,x))^{2p}) \nonumber \\
&&\leq  (C_{(\ref{rr2})}(\lam, p,T)+C_{(\ref{rr4})}(\lam,p,T)+C_{(\ref{rr5})}(\lam,p,T))e^{3p\lam|x|}e^{3p\lam|x-x'|}(|t'-t|^{p/2}+|x'-x|^p),\nonumber \\
&& \quad \ \forall 0 \leq t,t' \leq T, \ x,x'\in \re.
\eq
From (\ref{rr7}) and Kolmogorov's continuity theorem (see e.g. Theorem 4.3 in Chapter 1 of \cite{SPDEbook09}) we get that there exists a continuous modification of $u$, and together with (\ref{fact22}) we get that $ u\in \mathcal{C}(\re_{+},\mathcal{C}_{tem})$, \ $P$-a.s.
\paragraph{Step 3. Pathwise Uniqueness of the Solution}
In this step we prove the pathwise uniqueness for $\mathcal{C}(\re_{+},\mathcal{C}_{tem})$-solutions of (\ref{SHE})$(b,\sigma)$, starting from the same initial condition.   \medskip \\
Recall that $Z$ was defined in (\ref{zsx}). We assume throughout this step that for every strong $\mathcal{C}(\re_{+},\mathcal{C}_{tem})$ solution $v$ of (\ref{SHE})$(b,\sigma)$  there exists $K>0$ such that
\bq \label{z-asump}
\int_{0}^{T}\int_{\re}Z^2(t,x,v(t,x))dxds \leq K, \ P-\rm{a.s.}
\eq
This assumption is relaxed in Step 4. Note however that that by Remark \ref{rem-nov}, assumption (\ref{z-asump}) essures that $L=\{L_{t}\}_{t \in [0,T]}$ of (\ref{L-t}) is a martingale so that in particular $E\big(L_{t}\big)=1$ for every $t\in [0,T]$.
\medskip   \\
First we show that uniqueness in law for $\mathcal{C}(\re_{+},\mathcal{C}_{tem})$ solutions of (\ref{SHE})$(b,\sigma)$ holds. Let $v$ be a strong $\mathcal{C}(\re_{+},\mathcal{C}_{tem})$ solution to (\ref{SHE})$(b,\sigma)$ on $(\Omega,\mathcal{F},\mathcal{F}_t,P)$.
Note that by Theorem \ref{girsanov} we have for every $\phi\in \mathcal{C}_c^\infty$ 
\bq  \label{g-trns}
&&\int_{0}^{t}\int_{\re} b(s,x,v(s,x)) \phi(x)dx ds+ \int_{0}^{t}\int_{\re} \sigma(s,x,v(s,x))\phi(x)W(ds , dx) \nonumber  \\
&&=\int_{0}^{t}\int_{\re}Z(s,x)\sigma(s,x,v(s,x))\phi(x)dx ds+ \int_{0}^{t}\int_{\re} \sigma(s,x,v(s,x))\phi(x)W(ds , dx) \nonumber  \\
&&=\int_{0}^{t}\int_{\re}\sigma(s,x,v(s,x))\phi(x)\widetilde W(ds,dx) ,  \ \ \forall t \geq 0, \ P-\rm{a.s.},
\eq
where $\widetilde W$ is a white noise process under the probability measure $Q$, defined by (\ref{dQ-dP}).  
It follows from (\ref{g-trns}) that $v$ solves (\ref{mildForm})$(0,\sigma)$ under the measure $Q$. 
\paragraph{Notation.} We denote by $E^{ P}$ the expectation with respect to the probability measure $ P$. \medskip \\  
Note that from (\ref{L-t}) and (\ref{z-asump}) we have 
\bq \label{Lt-sec-moment}
E^{P}(L_t^{2})&=&E^{P}\bigg[\exp{\bigg(2\int_{0}^t\int_{\re}Z(s,x,v(s,x))W(ds , dx)-2\int_{0}^t\int_{\re}Z(s,x,v(s,x))^2 dxds \bigg)} \nonumber \\
\quad && \times \exp{\bigg(\int_{0}^t\int_{\re}Z(s,x,v(s,x))^2 dxds \bigg)}\bigg] \nonumber \\
&\leq& C(K)E^{P}\bigg[\exp{\bigg(2\int_{0}^t\int_{\re}Z(s,x,v(s,x))W(ds , dx)-\frac{1}{2}\int_{0}^t\int_{\re}\big[2Z(s,x,v(s,x))\big]^2 dxds \bigg)}\bigg] \nonumber \\
&\leq& C(K),
\eq
where in the last two inequalities we have used (\ref{z-asump}), as well as Remark \ref{rem-nov} in the last inequality. 
By Theorem \ref{girsanov} we can use (\ref{dQ-dP}) together with the Cauchy-Schwarz inequality and (\ref{Lt-sec-moment}) to get for every $p>0$, 
\bq \label{Cont1} 
E^{Q}\big(|v(t,x)-v(t',x')|^{2p}\big) &=&E^{P}\big(L_{T}|v(t,x)-v(t',x')|^{2p}\big)  \\
&\leq&\bigg(E^{P}\big(|v(t,x)-v(t',x')|^{4p}\big)\bigg)^{{1/2}}  \bigg(E^{P}\big(L^{2}_{T}\big)\bigg)^{{1/2}} \nonumber \\
&\leq &C(K)\bigg(E^{P}\big(|v(t,x)-v(t',x')|^{4p}\big)\bigg)^{{1/2}}, \ \forall 0\leq t,t' \leq T, \ x,x'\in\re, \ |x-x'|\leq 1 . \nonumber
\eq
Now follow the same lines as in Step 2 with $v$ instead of $\tilde u_{n,k}$ and by using (\ref{grow}) and (\ref{Gen-Moment-Cond}) instead of (\ref{grow-bnk}) and (\ref{fact}).  \medskip \\
From the preceding paragraph we get that for every $\lam>0$ and $p>2$ there exists $C(T,\lam,p)>0$ such that 
\bq \label{det} 
E^{P}\big(|v(t,x)-v(t',x')|^{4p}\big) &\leq& C(T,\lam,p)(|x-x'|^{2p}+|t-t'|^{p})e^{6\lam p |x|},  \nonumber \\
&&\ \forall 0\leq t,t' \leq T, \ x,x'\in\re, \ |x-x'|\leq 1.
\eq
From (\ref{det}) we get for every $\lam>0$ and $p>2$,
\bq \label{Cont2} 
\bigg(E^{P}\big(|v(t,x)-v(t',x')|^{4p}\big)\bigg)^{{1/2}} &\leq& C(T,\lam,p)(|x-x'|^{p}+|t-t'|^{p/2})e^{3\lam p |x|},  \nonumber \\
&&\ \forall 0\leq t,t' \leq T, \ x,x'\in\re, \ |x-x'|\leq 1.
\eq
From (\ref{Cont1}), (\ref{Cont2}) and Lemma 6.3(i) in \cite{Shi94} we get that under the measure $Q$, $v$ has a $\mathcal C(\re_{+},\mathcal C_{tem})$ version which satisfies (\ref{SHE})$(0,\sigma)$. 
Note that from Theorem \ref{girsanov} we also get that $Q$ is absolutely continuous 
with respect to $P$ restricted to $\mathcal{F}^{W}_{T}$, and so together with (\ref{z-asump}) we have
\bq \label{z-asump-Q}
\int_{0}^{T}\int_{\re}Z^2(t,x,v(t,x))dxds \leq K, \ Q-\rm{a.s.}
\eq
We can use (\ref{z-asump-Q}), Remark  \ref{rem-nov} and repeat the same lines as in (\ref{g-trns}) to get that 
\bq \label{dP-dQ}
\frac{dP}{dQ}\bigg|_{\mathcal F^{\widetilde W }_{T}}=\widetilde L_T,
\eq
where 
\bn \label{tilde-L-t}
\widetilde L_t=\exp{\bigg(-\int_{0}^t\int_{\re}Z(s,x,v(s,x))W(ds , dx)+\frac{1}{2}\int_{0}^t\int_{\re}Z(s,x,v(s,x))^2 dxds \bigg)}.
\en
From Theorem 1.3 in \cite{MP09} we get the uniqueness in law for $\mathcal C(\re_{+},\mathcal C_{tem})$ solutions of (\ref{SHE})$(0,\sigma)$. Use this uniqueness in law together with (\ref{dP-dQ}) to get that the law of any strong $\mathcal C(\re_{+},\mathcal C_{tem})$ solution of (\ref{SHE})$(b,\sigma)$ is uniquely determined by the law of $v$ under the measure $Q$ and by $\widetilde L_{T}$,  therefore the uniqueness in law for (\ref{SHE})$(b,\sigma)$ follows. From Theorem 3.14 in \cite{Kurtz2007} we get that the pathwise uniqueness for the solutions of (\ref{SHE})$(b,\sigma)$ follows from the existence of a strong solution $u$ together with the uniqueness in law for (\ref{SHE})$(b,\sigma)$.
\paragraph{Step 4. Pathwise Uniqueness in the General Case}
In this step we assume that $b$ and $\sigma$ satisfy the assumptions of Theorem \ref{Thm-UNIQ}.
Let $v^i, \ i=1,2$ be two strong $\mathcal{C}(\re_{+},\mathcal{C}_{tem})$ solutions of (\ref{SHE})$(b,\sigma)$. Let $K>0$ and define
\bn
T_K&=&\inf\bigg\{t\geq 0:\bigg( \int_{0}^{t}\int_{\re}Z^2(s,x,v^1(s,x)) dxds \bigg)\vee \bigg(\int_{0}^{t}\int_{\re}Z^2(s,x,v^2(s,x)) dxds \bigg)>K\bigg\}.
\en
Denote by
\bn
b_K(t,x,u,\omega)=b(t,x,u,\omega)\mathds{1}(t\leq T_K),
\en
and
\bn
Z_K(s,x,u,\omega)=Z(s,x,u,\omega)\mathds{1}(t \leq T_K), \ i=1,2.
\en
Here $b_K$ and $Z_K$ satisfy (\ref{zsx}) and (\ref{z-asump}). Note that the restrictions on $v^i, \ i=1,2,$ to $[0,T_K]\times \re$ are the restrictions of the unique solution to (\ref{SHE})$(b_K,\sigma)$.
By the hypothesis of Theorem \ref{Thm-UNIQ} we have $T_K\rr \infty$, $P$-a.s. when $K\rr \infty$, and the pathwise uniqueness follows.   \qed

\section{Acknowledgments}
We would like thank an anonymous referee for the careful reading of the manuscript,
and for a number of useful comments and suggestions that improved the exposition.

\def\cprime{$'$} \def\cprime{$'$}



\begin{thebibliography}{10}

\bibitem{Bally-Gyon-Pard1994}
V.~Bally, I.~Gyongy, and E.~Pardoux.
\newblock White noise driven parabolic {SPDE}s with measurable drift.
\newblock {\em Journal of Functional Analysis}, 120(2):484 -- 510, 1994.

\bibitem{Cabana70}
E.~M. Caba{\~n}a.
\newblock The vibrating string forced by white noise.
\newblock {\em Z. Wahrscheinlichkeitstheorie und Verw. Gebiete}, 15:111--130,
  1970.

\bibitem{Cerrai-DaPrato-Flandoli2013}
S.~Cerrai, G.~Da~Prato, and F.~Flandoli.
\newblock Pathwise uniqueness for stochastic reaction-diffusion equations in
  banach spaces with an {H}\"{o}lder drift component.
\newblock {\em Stochastic Partial Differential Equations: Analysis and
  Computations}, 1(3):507--551, 2013.

\bibitem{DaPrato-Flandoli-Priola-Rockner2013}
G.~Da~Prato, F.~Flandoli, E.~Priola, and M.~R{\"o}ckner.
\newblock Strong uniqueness for stochastic evolution equations in {H}ilbert
  spaces perturbed by a bounded measurable drift.
\newblock {\em Ann. Probab.}, 41(5):3306--3344, 2013.

\bibitem{Daprato92}
G.~Da~Prato and J.~Zabczyk.
\newblock {\em Stochastic equations in infinite dimensions}, volume~44 of {\em
  Encyclopedia of Mathematics and its Applications}.
\newblock Cambridge University Press, Cambridge, 1992.

\bibitem{SPDEbook09}
R.C. Dalang, D.~Khoshnevisan, C.~Mueller, D.~Nualart, and Y.~Xiao.
\newblock {\em A minicourse on stochastic partial differential equations},
  volume 1962 of {\em Lecture Notes in Mathematics}.
\newblock Springer-Verlag, Berlin, 2009.
\newblock Held at the University of Utah, Salt Lake City, UT, May 8--19, 2006,
  Edited by Khoshnevisan and Firas Rassoul-Agha.
  
\bibitem{Mueller-Dalang2013}
R.~C. Dalang and C.~Mueller.
\newblock Multiple points of the brownian sheet in critical dimensions.
\newblock {\em (preprint) arXiv:1303.0403 [math.PR]}, 2013.

\bibitem{Dawson72}
D.~A. Dawson.
\newblock Stochastic evolution equations.
\newblock {\em Math. Biosci.}, 15:287--316, 1972.

\bibitem{Dawson75}
D.~A. Dawson.
\newblock Stochastic evolution equations and related measure processes.
\newblock {\em J. Multivariate Anal.}, 5:1--52, 1975.

\bibitem{Funaki84}
T.~Funaki.
\newblock Random motion of strings and stochastic differential equations on the
  space {$C([0,1],{\bf R}^d)$}.
\newblock In {\em Stochastic analysis ({K}atata/{K}yoto, 1982)}, volume~32 of
  {\em North-Holland Math. Library}, pages 121--133. North-Holland, Amsterdam,
  1984.

\bibitem{Gyon1995}
I.~Gy\"{o}ngy.
\newblock On non-degenerate quasi-linear stochastic partial differential
  equations.
\newblock {\em Potential Analysis}, 4(2):157--171, 1995.

\bibitem{GyongyPardoux1993}
I.~Gy\"{o}ngy and E.~Pardoux.
\newblock On quasi-linear stochastic partial differential equations.
\newblock {\em Probability Theory and Related Fields}, 94(4):413--425, 1993.

\bibitem{GyongyPardoux1993b}
I.~Gy\"{o}ngy and E.~Pardoux.
\newblock On the regularization effect of space-time white noise on
  quasi-linear parabolic partial differential equations.
\newblock {\em Probability Theory and Related Fields}, 97(1-2):211--229, 1993.

\bibitem{Funaki83}
A.~Inoue and T.~Funaki.
\newblock On a new derivation of the {N}avier-{S}tokes equation.
\newblock {\em Comm. Math. Phys.}, 65(1):83--90, 1979.

\bibitem{Krylov77}
N.~V. Krylov and B.~L. Rozovskii.
\newblock The {C}auchy problem for linear stochastic partial differential
  equations.
\newblock {\em Izv. Akad. Nauk SSSR Ser. Mat.}, 41(6):1329--1347, 1448, 1977.

\bibitem{Krilov79b}
N.~V. Krylov and B.~L. Rozovskii.
\newblock It\^{o} equations in {B}anach spaces and strongly parabolic
  stochastic partial differential equations.
\newblock {\em Dokl. Akad. Nauk SSSR}, 249(2):285--289, 1979.

\bibitem{Krilov79}
N.~V. Krylov and B.~L. Rozovskii.
\newblock Stochastic evolution equations.
\newblock In {\em Current problems in mathematics, {V}ol. 14 ({R}ussian)},
  pages 71--147, 256. Akad. Nauk SSSR, Vsesoyuz. Inst. Nauchn. i Tekhn.
  Informatsii, Moscow, 1979.

\bibitem{Kunita90}
H.~Kunita.
\newblock {\em Stochastic flows and stochastic differential equations},
  volume~24 of {\em Cambridge Studies in Advanced Mathematics}.
\newblock Cambridge University Press, Cambridge, 1990.

\bibitem{Kurtz2007}
T.~G. Kurtz.
\newblock The {Y}amada-{W}atanabe-{E}ngelbert theorem for general stochastic
  equations and inequalities.
\newblock {\em Electron. J. Probab.}, 12:951--965, 2007.

\bibitem{Mueller91}
C.~Mueller.
\newblock On the support of solutions to the heat equation with noise.
\newblock {\em Stochastics Stochastics Rep.}, 37(4):225--245, 1991.

\bibitem{MMQ2010}
C.~Mueller, L.~Mytnik, and J.~Quastel.
\newblock Effect of noise on front propagation in reaction-diffusion equations
  of kpp type.
\newblock {\em Inventiones mathematicae}, 184(2):405--453, 2011.

\bibitem{MP09}
L.~Mytnik and E.A. Perkins.
\newblock Pathwise uniquenesss for stochastic partial differential equations
  with {H}\"{o}lder coefficients.
\newblock {\em Probab. Theory Related Fields}, 149:1--96, 2011.

\bibitem{MPS06}
L.~Mytnik, E.A. Perkins, and A.~Sturm.
\newblock On pathwise uniqueness for stochastic heat equations with
  non-{L}ipschitz coefficients.
\newblock {\em Ann. Probab.}, 34(5):1910--1959, 2006.

\bibitem{Nakao1983}
S.~Nakao.
\newblock On pathwise uniqueness and comparison of solutions of one-dimensional
  stochastic differential equations.
\newblock {\em Osaka J. Math.}, 20(1):197--204, 1983.

\bibitem{NualartPardoux1994}
D.~Nualart and E.~Pardoux.
\newblock Markov field properties of solutions of white noise driven
  quasi-linear parabolic {PDE}s.
\newblock {\em Stochastics Stochastics Rep.}, 48(1-2):17--44, 1994.

\bibitem{Perk2002}
E.~Perkins.
\newblock Dawson-{W}atanabe superprocesses and measure-valued diffusions.
\newblock In {\em Lectures on probability theory and statistics
  ({S}aint-{F}lour, 1999)}, volume 1781 of {\em Lecture Notes in Math.}, pages
  125--324. Springer, Berlin, 2002.

\bibitem{Rippl-Sturm2013}
T.~Rippl and A.~Sturm.
\newblock New results on pathwise uniqueness for the heat equation with colored
  noise.
\newblock {\em Electron. J. Probab.}, 18:no. 77, 1--46, 2013.

\bibitem{rosen}
J.~Rosen.
\newblock Joint continuity of the intersection local times of {M}arkov
  processes.
\newblock {\em Ann. Probab.}, 15(2):659--675, 1987.

\bibitem{Shi94}
T.~Shiga.
\newblock Two contrasting properties of solutions for one-dimensional
  stochastic partial differential equations.
\newblock {\em Canad. J. Math.}, 46(2):415--437, 1994.

\bibitem{Walsh}
J.~B. Walsh.
\newblock An introduction to stochastic partial differential equations.
\newblock In {\em \'{E}cole d'\'et\'e de probabilit\'es de {S}aint-{F}lour,
  {XIV}---1984}, volume 1180 of {\em Lecture Notes in Math.}, pages 265--439.
  Springer, Berlin, 1986.

\bibitem{YW71}
S.~Watanabe and T.~Yamada.
\newblock On the uniqueness of solutions of stochastic differential equations.
  {II}.
\newblock {\em J. Math. Kyoto Univ.}, 11:553--563, 1971.

\end{thebibliography}

\end{document}